\newcommand{\ph}{\varphi}
\newcommand{\R}{\mathbb{R}}
\newcommand{\C}{\mathbb{C}}
\newcommand{\Z}{\mathbb{Z}}
\newcommand{\mc}{\mathcal}
\newcommand{\sur}{\twoheadrightarrow}
\DeclareMathOperator{\abs}{Abs}
\DeclareMathOperator{\pre}{\mc{P}}
\DeclareMathOperator{\co}{\mc{C}}
\newcommand{\sgn}{\mathrm{sgn}}
\DeclareMathOperator{\Aut}{Aut}
\DeclareMathOperator{\rank}{rank}
\DeclareMathOperator{\codim}{codim}
\newtheorem{theorem}{Theorem}[section]
\newtheorem{def-prop}[theorem]{Definition-Proposition}
\newtheorem{prop}[theorem]{Proposition}
\newtheorem{conj}[theorem]{Conjecture}
\newtheorem{question}[theorem]{Question}
\newtheorem{cor}[theorem]{Corollary}
\theoremstyle{definition}
\theoremstyle{remark}
\newtheorem*{remark}{Remark}
\begin{document}

\title[On the Sperner property for the absolute order]{On the Sperner property for the absolute order on complex reflection groups}
\author{Christian Gaetz}
\address{Department of Mathematics, Massachusetts Institute of Technology, Cambridge, MA.}
\email{\href{mailto:gaetz@mit.edu}{gaetz@mit.edu}} 
\author{Yibo Gao}
\email{\href{mailto:gaoyibo@mit.edu}{gaoyibo@mit.edu}} 
\date{\today}

\begin{abstract}
Two partial orders on a reflection group $W$, the \emph{codimension order} and the \emph{prefix order}, are together called the \emph{absolute order} $\abs(W)$ when they agree.  We show that in this case the absolute order on a complex reflection group has the strong Sperner property, except possibly for the Coxeter group of type $D_n$, for which this property is conjectural.  The Sperner property had previously been established for the \emph{noncrossing partition lattice $NC_W$} \cite{Muhle, Reiner}, a certain maximal interval in $\abs(W)$, but not for the entire poset, except in the case of the symmetric group \cite{Harper2019}.  We also show that neither the codimension order nor the prefix order has the Sperner property for general complex reflection groups.
\end{abstract}

\maketitle

\section{Introduction} \label{sec:intro}

A ranked poset $P$ with rank decomposition $P_0 \sqcup P_1 \sqcup \cdots \sqcup P_r$ is \emph{$k$-Sperner} if no union of $k$ antichains of $P$ is larger than the union of the largest $k$ ranks of $P$ (see \cite{Engel} for an introduction to Sperner theory).  It is \emph{strongly Sperner} if it is $k$-Sperner for $k=1,2,...,r+1$. 

The \emph{absolute order} on a Coxeter group $W$ has two equivalent descriptions: one in terms of the reflection lengths of its elements, and the other in terms of the codimensions of their fixed spaces.  The maximal intervals $[\mathrm{id}, c]$ in $\abs(W)$, where $c$ is a \emph{Coxeter element}, are known as the \emph{noncrossing partition lattices} $NC_W$.  They appeared in work of Brady and Watt \cite{Brady} on $K(\pi,1)$'s for Artin braid groups and have been studied combinatorially by Reiner \cite{Reiner} and Armstrong \cite{Armstrong}, among others.  The posets $NC_W$ are known to be strongly Sperner \cite{Muhle, Reiner}; this paper follows recent work of Harper and Kim \cite{Harper2019} in studying the problem of whether the whole absolute order $\abs(W)$ is strongly Sperner. 

We choose to work in the setting of general complex reflection groups, rather than just Coxeter groups.  In this generality, the two orders (the \emph{prefix} and \emph{codimension} orders) do not always agree.  We reserve the term \emph{absolute order} for the situation when they do agree (these cases have been classified by Foster-Greenwood \cite{Foster-Greenwood}, see Proposition \ref{prop:codim-equals-prefix}).  It is a common feature of work in this area that the two orders are better-behaved when they agree; and Theorem \ref{thm:main-theorem} and Conjecture \ref{conj:type-D} below make the case that this is true in regard to the Sperner property.

Our main result follows; see Section \ref{sec:background} for background and definitions, Section \ref{sec:proof} for the proof, and Section \ref{sec:counterexamples} for examples showing that the strong Sperner property does not hold in general for either the prefix order or codimension order when they disagree.  Conjecture \ref{conj:type-D} is discussed in Section \ref{sec:type-D}.

\begin{theorem} \label{thm:main-theorem}
Let $W=W_1 \times \cdots \times W_k$ be a finite complex reflection group, where each $W_i$ is in the family $G(m,1,n)$ or is an irreducible Coxeter group of type other than type $D$; then $\abs(W)$ is strongly Sperner.
\end{theorem}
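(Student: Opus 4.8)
The plan is to reduce from the product $W = W_1 \times \cdots \times W_k$ to its irreducible factors and then to certify the strong Sperner property for each factor by an explicit algebraic or combinatorial device. Since reflection length (equivalently, fixed-space codimension) is additive across a direct product and fixed spaces decompose, the absolute order factors as a poset product $\abs(W) \cong \abs(W_1) \times \cdots \times \abs(W_k)$. The central structural input is therefore a \emph{product lemma}: I would show that if each $\abs(W_i)$ carries a linear order-raising operator $U_i$ on $\bigoplus_j \C (P_i)_j$ that is injective below the peak of its rank sizes and surjective above it (equivalently, a saturated chain decomposition into chains whose rank-intervals are nested about the peak), then $\abs(W)$ inherits the same structure, and hence is rank-unimodal and strongly Sperner. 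This is the step where I expect to lean on $\mathfrak{sl}_2$-type representation theory, tensoring Lefschetz modules; the essential subtlety is that the individual factors need \emph{not} be rank-symmetric (already $\abs(S_3)$ has rank sizes $1,3,2$), so the classical ``product of Peck posets is Peck'' theorem does not apply verbatim and a unimodal version of the Lefschetz criterion must be developed and shown to tensor.

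For the irreducible factors I would split into cases. The exceptional and dihedral Coxeter groups $E_6,E_7,E_8,F_4,H_3,H_4$ and $I_2(m)$ are finite in number (the dihedral family being a single one-parameter family whose absolute order has only three ranks), so the required operator or nested chain decomposition can be produced by direct, and where necessary computer-assisted, verification. The genuinely infinite input is $G(m,1,n)$, which contains type $A$ at $m=1$ and type $B/C$ at $m=2$. Here I would exploit the wreath-product structure $G(m,1,n)=\Z/m \wr S_n$ and induct on $n$: an element is a colored permutation graded by absolute length, and cycle deletion or merging gives a recursive description of the order refining the symmetric-group case of Harper and Kim \cite{Harper2019}. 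Using this recursion I would build the raising operator on $\abs(G(m,1,n))$ from that on $\abs(G(m,1,n-1))$ together with the cycle-type combinatorics of the extra strand, checking the injectivity-below-peak and surjectivity-above-peak conditions rank by rank.

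The main obstacle is precisely this explicit construction for $G(m,1,n)$ and the verification that it meets the Lefschetz conditions uniformly in $m$ and $n$: one must control how the rank-generating function (the distribution of absolute lengths, a product of $q$-analogue factors for $G(m,1,n)$) interacts with the raising operator so that injectivity and surjectivity hold exactly on the ascending and descending halves of the unimodal rank sequence. I expect this to be the heart of the argument, and also the point at which type $D_n$ resists: the type-$D$ reflection-length statistics and the breakdown of the clean wreath-product recursion obstruct the inductive step, which is why the strong Sperner property there remains only conjectural (Conjecture \ref{conj:type-D}). The counterexamples of Section \ref{sec:counterexamples}, which show that neither the prefix nor the codimension order is strongly Sperner once the two orders disagree, suggest that the hypothesis of being in a genuine absolute order is used essentially, presumably through the availability of the recursive operator (or chain) structure that only the coincidence of the two orders guarantees.
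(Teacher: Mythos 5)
The reduction to irreducible factors is sound and matches the paper (Proposition \ref{prop:product-of-groups-has-product-order}), but your central ``product lemma'' is a genuine gap, and it sits exactly where the real difficulty of the theorem lives. You propose that an order-raising operator which is injective below the peak and surjective above it --- ``equivalently,'' a chain decomposition with rank-intervals nested about the peak --- tensors across products. Neither claim is established anywhere, and the parenthetical ``equivalently'' is itself false: each condition implies strongly Sperner, but they are not equivalent to one another. More importantly, the $\mathfrak{sl}_2$ tensor argument behind ``products of Peck posets are Peck'' uses rank symmetry essentially (the raising map must extend to an $\mathfrak{sl}_2$-triple, which forces symmetric weight strings), and as you yourself observe, $\abs(S_3)$ already fails rank symmetry; meanwhile strongly Sperner alone does not tensor, by the Proctor counterexample the paper cites. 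So the ``unimodal version of the Lefschetz criterion that tensors'' is not a routine adaptation to be developed in passing --- no such theorem is known, and any correct substitute will need hypotheses at least as strong as what the paper actually uses: Harper's normalized flow property, for which a genuine product theorem exists (Theorem \ref{thm:nfproduct}) under log-concavity of the rank sizes, with log-concavity of each factor supplied for free by the real-rootedness of $F(\abs(W_i),q)=\prod_j(1+e_jq)$ from Equation (\ref{eq:rank-generating-function}). In effect, making your product lemma true amounts to rederiving Harper's theory, and without it your entire reduction collapses.

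Two further concrete divergences. For $G(m,1,n)$ the paper does not run an operator recursion at all: it proves a unique-factorization statement --- the reflections partition as $T_1\sqcup\cdots\sqcup T_n$ with $|T_j|=jm-1$, and every $w$ factors uniquely as $t_{i_1}\cdots t_{i_k}$ with $i_1<\cdots<i_k$ and $t_{i_j}\in T_{i_j}$, the factorization being reduced --- which exhibits a coarsening of $\abs(G(m,1,n))$ isomorphic to the claw product $C_m\times C_{2m}\times\cdots\times C_{nm}$ (Theorem \ref{thm:m1n}); the flow then comes from the claws, with no rank-by-rank injectivity/surjectivity verification needed. Your strand-by-strand induction is in the same spirit (your ``extra strand'' is essentially the paper's $T_n$), but as stated it defers the hard analytic checks you flag as ``the heart of the argument'' rather than resolving them. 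Finally, your plan of ``direct, where necessary computer-assisted, verification'' for the exceptional groups is infeasible as stated: $\abs(W)$ for $W$ of type $E_8$ has roughly $7\times 10^8$ elements and $4\times 10^{10}$ cover relations. The paper makes the computation tractable only via the quotient $\abs(W)/W$ under the conjugation action (Proposition \ref{prop:flow-on-quotient}), which for $E_8$ has just $112$ elements and $449$ cover relations, using biregularity of the Hasse graph between orbit fibers; you would need an analogous equivariance reduction for whatever operator or chain structure you verify by machine.
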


\begin{remark}
Theorem \ref{thm:main-theorem} in the case where all $W_i$ are symmetric groups follows from a result of Harper and Kim \cite{Harper2019}.  The type $B$ case was proven independently by Harper, Kim, and Livesay \cite{Harper2019b} while this paper was in preparation.
\end{remark}

\begin{conj} \label{conj:type-D} \text{}
\begin{itemize} 
    \item[\normalfont{(a)}] Let $W$ be the Coxeter group of type $D_n$, then $\abs(W)$ has a normalized flow with $\nu \equiv 1$ (see Section \ref{sec:normalized-flow}).  And, therefore,
    \item[\normalfont{(b)}] $\abs(W)$ is strongly Sperner for any finite complex reflection group $W$ such that the prefix order $\pre(W)$ is equal to the codimension order $\co(W)$.
\end{itemize}
\end{conj}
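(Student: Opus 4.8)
The plan is to isolate part~(a) as the sole point of difficulty and to deduce part~(b) from it formally. For~(b) I would assume~(a) and invoke the two structural facts underlying Theorem~\ref{thm:main-theorem}, both established through the normalized-flow machinery of Section~\ref{sec:normalized-flow}: that the class of posets admitting a normalized flow with $\nu\equiv 1$ is closed under direct products of posets, and that any poset admitting a normalized flow is strongly Sperner. Since reflection length and fixed-space codimension are both additive over the direct factors of $W=W_1\times\cdots\times W_k$, when $\pre(W)=\co(W)$ we have $\abs(W)\cong\abs(W_1)\times\cdots\times\abs(W_k)$. By the classification in Proposition~\ref{prop:codim-equals-prefix}, the irreducible factors $W_i$ are, apart from the cases already covered by Theorem~\ref{thm:main-theorem} (the groups $G(m,1,n)$ and the irreducible Coxeter groups not of type $D$), only Coxeter groups of type $D$. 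The proof of Theorem~\ref{thm:main-theorem} produces a normalized flow with $\nu\equiv 1$ on each covered factor, so granting~(a) for the type-$D$ factors, every factor carries such a flow; by product-closure so does $\abs(W)$, and by the Sperner implication $\abs(W)$ is strongly Sperner, giving~(b).

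All of the work therefore lies in part~(a): building an explicit normalized flow with $\nu\equiv 1$ on $\abs(D_n)$. I would fix the standard model of $D_n$ as the group of even-signed permutations, record each element by the codimension of its fixed space, and describe the covering relations of $\abs(D_n)$ through the type-$D$ reflections. Because $D_n$ is the index-two subgroup of $B_n=G(2,1,n)$, my first attempt would be to transport the normalized flow already constructed on $\abs(B_n)$: restrict its edge weights to the covering relations that persist in $\abs(D_n)$ and then repair the weights so that the balance equations defining a normalized flow are restored with $\nu\equiv 1$. Concretely I would write the balance condition at each element as a linear system in the unknown edge weights and seek a nonnegative solution that is uniform in $n$, indexed in closed form by the combinatorial (cycle and sign) type of the element.

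The step I expect to be the genuine obstacle is proving that such a nonnegative, $\nu\equiv 1$ solution exists for all $n$ --- which is exactly why type $D$ is excluded from Theorem~\ref{thm:main-theorem}. The rank numbers and local structure of $\abs(D_n)$ are markedly less regular than in the $G(m,1,n)$ cases, and the naive restriction of the type-$B$ flow will in general violate the balance equations near the elements at which the two sign-parity classes interact, so the repair is not formal. A realistic plan is therefore threefold: first, confirm that the desired flow exists by solving the finite linear-feasibility problem for small $n$ by computer; second, extract from these solutions a conjectural closed form for the edge weights; and third, prove its nonnegativity and the balance identities for all $n$ by induction, peeling off the last coordinate to relate $\abs(D_n)$ to smaller rank pieces such as $\abs(D_{n-1})$ and the type-$B$ instances that appear once the even-sign condition is relaxed on the remaining coordinates. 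Making that inductive comparison uniform --- controlling precisely the boundary contributions that distinguish $D_n$ from $B_n$ --- is where I expect essentially all of the effort to go, and it is the reason~(a) remains a conjecture rather than a theorem.
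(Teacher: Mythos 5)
First, be aware of the status of this statement: it is a conjecture, and the paper contains no proof of part~(a) --- only computer verification up to $n=8$ and a negative result (Proposition~\ref{prop:DnoProd}) explaining why the method of Theorem~\ref{thm:m1n} cannot work in type $D$. So the honest standard of comparison is: does your reduction of (b) to (a) match the paper's implicit ``And, therefore,'' and is your program for (a) viable? On the first point you are essentially right, with one imprecision worth fixing: the class of posets admitting a normalized flow with $\nu\equiv 1$ is \emph{not} closed under products per se. Harper's product theorem (Theorem~\ref{thm:nfproduct}) requires the factors to be rank log-concave in addition to having normalized flows, and this hypothesis is doing real work --- the paper's own remark notes that products of rank log-concave strongly Sperner posets can fail to be Sperner, which is exactly why the flow machinery is needed. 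In your reduction you must therefore supply log-concavity of each $\abs(W_i)$, which follows from Equation~(\ref{eq:rank-generating-function}): $F(\abs(W_i),q)=\prod_j(1+e_jq)$ is real-rooted, hence has log-concave coefficients. You also need (and implicitly use) that $\pre(W)=\co(W)$ for the product forces $\pre(W_i)=\co(W_i)$ for each irreducible factor, which follows from Proposition~\ref{prop:product-of-groups-has-product-order} by restricting to elements supported on one factor; with Proposition~\ref{prop:codim-equals-prefix} this correctly identifies type $D_n$ as the only missing case. Modulo the log-concavity point, your (a)$\Rightarrow$(b) argument is the same as the paper's intended one.

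For part~(a) you do not give a proof, and you say so; that is unavoidable, since the claim is open. But one concrete warning about your proposed route: your ``first attempt'' of restricting the $B_n=G(2,1,n)$ flow and repairing it is more obstructed than you suggest. The flow the paper constructs on $\abs(G(2,1,n))$ (Corollary~\ref{cor:G(m,1,n)-normalized-flow}) is supported entirely on the embedded claw product $C_2\times C_4\times\cdots\times C_{2n}$ of Theorem~\ref{thm:m1n}, and many of its edges involve type~(2) reflections (sign changes), which do not lie in $D_n$ at all; moreover Proposition~\ref{prop:DnoProd} shows that no analogous claw product $C_2\times C_4\times\cdots\times C_{2n-2}\times C_n$ embeds in $\abs(D_n)$ for $n\geq 4$, so there is no subposet of the right shape on which a restricted-and-repaired flow could be supported. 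Any successful construction must therefore be of a genuinely different kind (the paper's own verification for $n\leq 8$ proceeds by direct computer search, and its quotient technique, Proposition~\ref{prop:flow-on-quotient}, is the natural tool for making larger $n$ computationally feasible). Your three-step plan (small-$n$ feasibility, conjectural closed form, induction peeling off a coordinate) is a reasonable research program, but as submitted it leaves part~(a) exactly where the paper leaves it: conjectural.
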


\section{Background and definitions} \label{sec:background}
\subsection{Complex reflection groups} \label{sec:reflection-groups}

For $V$ an $n$-dimensional complex vector space, a finite group $W \subset GL(V)$ is a \emph{complex reflection group} of \emph{rank $n$} if it is generated by its set of \emph{reflections} $T=\{w \in W \: | \: \dim(V^w)=n-1\}$, where $V^w$ denotes the fixed subspace of $w$.  We say $W$ is \emph{irreducible} if $V$ is irreducible as a representation of $W$.  Any finite reflection group is a product of irreducible reflection groups.  The finite irreducible complex reflection groups were famously classified by Shephard and Todd \cite{Shephard}.  They consist of the following groups:
\begin{itemize}
    \item A 3-parameter infinite family of groups $G(m,p,n)$ where $p | m$ and $n,m \geq 1$, with the exception that $G(2,2,2)$ is reducible.  $G(m,p,n)$ has rank $n$ except when $m=1$, in which case $G(1,1,n)\cong S_n$ has rank $n-1$.
    \item 34 exceptional groups usually denoted $G_4,G_5,...,G_{37}$.
\end{itemize}

Among these, the groups which can be realized over a \emph{real} vector space $V$ are exactly the finite Coxeter groups.  These too have a familiar classification into Cartan-Killing types:
\begin{itemize}
    \item Type $A_{n-1}$: the symmetric groups $S_n = G(1,1,n)$,
    \item Type $B_n$: the hyperoctahedral groups $(\Z/2 \Z) \wr S_n =G(2,1,n)$,
    \item Type $D_n$: the groups $G(2,2,n)$, index-2 subgroups of the hyperoctahedral groups,
    \item Type $I_2(m)$: the dihedral groups $G(m,m,2)$, and
    \item The exceptional finite Coxeter groups of types $H_3, H_4, F_4, E_6, E_7, E_8$ (which coincide with $G_{23},G_{30},G_{28},G_{35},G_{36}$ and $G_{37}$ respectively).
\end{itemize}

\subsection{The absolute order on a reflection group} \label{sec:absolute-order}

Given $W$ a reflection group, the \emph{reflection length} $\ell_R(w)$ of an element $w \in W$ is defined to be the smallest $k$ such that $w=t_1 \cdots t_k$ with each $t_i \in T$, where $T$ denotes the set of all reflections in $W$. In this case, we say $t_1\cdots t_k$ is a \textit{reduced word} or \textit{reduced decomposition} for $w$. The \emph{prefix order} $\pre(W)$ is the partial order on $W$ such that $u \leq v$ if and only if
\begin{equation} \label{eq:prefix-order}
\ell_R(u)+\ell_R(u^{-1}v) = \ell_R (v).
\end{equation}
$\pre(W)$ is a ranked poset with rank function $\ell_R$.

\begin{remark}
The prefix order should not be confused with the \emph{weak order} on $W$ when $W$ is a Coxeter group.  The weak order is defined by an equation similar to (\ref{eq:prefix-order}), but with the reflection length $\ell_R$ replaced by the more classical length $\ell$, which records the shortest decomposition of an element of $W$ as a product of \emph{simple} reflections.  The strong Sperner property for the weak order in type $A_n$ was previously established by the authors \cite{weak-order-sperner}.  Another related order, the (strong) Bruhat order, is known to be strongly Sperner for all Coxeter groups by work of Stanley \cite{Stanley1980}.
\end{remark}

The \emph{codimension order} $\co(W)$ is defined so that $u \leq v$ if and only if 
\begin{equation} \label{eq:codimension-order}
\codim(V^u)+\codim(V^{u^{-1}v})=\codim(V^v).
\end{equation}
where $V^w$ denotes the subspace of $V$ fixed by the action of $w$ given by the inclusion $W \subset GL(V)$.

It was first proven by Carter \cite{Carter} that when $W$ is a Coxeter group, we have $\ell_R(w)=\codim(V^w)$ for all $w \in W$, so that in particular $\co(W)=\pre(W)$.  Foster-Greenwood has classified the complex reflection groups for which this coincidence continues to hold:

\begin{prop}[Foster-Greenwood \cite{Foster-Greenwood}] \label{prop:codim-equals-prefix}
For $W$ an irreducible complex reflection group, $\co(W)=\pre(W)$ if and only if $W$ is a Coxeter group or is in the family $G(m,1,n)$.
\end{prop}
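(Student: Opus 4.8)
The plan is to reduce the proposition to the pointwise identity $\ell_R(w)=\codim(V^w)$ and then verify that identity type-by-type. First I would prove the following reduction: for any finite complex reflection group $W$, one has $\co(W)=\pre(W)$ as posets if and only if $\ell_R(w)=\codim(V^w)$ for every $w\in W$. The implication ($\Leftarrow$) is immediate, since then the defining relations (\ref{eq:prefix-order}) and (\ref{eq:codimension-order}) literally coincide. For ($\Rightarrow$) I argue by contradiction, using the universal bound $\codim(V^w)\le\ell_R(w)$ (if $w=t_1\cdots t_k$ is any reflection factorization then $V^{t_1}\cap\cdots\cap V^{t_k}\subseteq V^w$, so $\codim(V^w)\le k$). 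Suppose $\codim(V^w)<\ell_R(w)=:k$ for some $w$. A reduced factorization gives a saturated chain $\mathrm{id}<_{\pre}t_1<_{\pre}t_1t_2<_{\pre}\cdots<_{\pre}w$ of length $k$. On the other hand, whenever $u<_{\co}v$ with $u\ne v$ the defining relation forces $\codim(V^{u^{-1}v})=\codim(V^v)-\codim(V^u)\ge 1$, so $\codim$ strictly increases along every chain of $\co(W)$ and hence any chain from $\mathrm{id}$ to $w$ there has length at most $\codim(V^w)<k$. Were the two orders equal, the interval $[\mathrm{id},w]$ would carry chains of both lengths, a contradiction.

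By this reduction the proposition becomes: for irreducible $W$, $\ell_R=\codim$ identically if and only if $W$ is Coxeter or equals some $G(m,1,n)$. The Coxeter case is Carter's theorem, cited above. For $W=G(m,1,n)=\Z/m\wr S_n$ I would use the $m$-colored-permutation model: decomposing $w$ into colored cycles, a length-$\ell$ cycle of total color $a$ acts on its coordinate block by a matrix $M$ with $M^\ell=\zeta^a\cdot\mathrm{Id}$ (where $\zeta=e^{2\pi i/m}$), so it contributes a one-dimensional fixed space exactly when $a\equiv 0\pmod m$; hence $\codim(V^w)=n-\#\{\text{cycles of total color }0\}$. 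To match this with $\ell_R$ I would show that for $w\ne\mathrm{id}$ one can always choose a reflection $t$ (of transposition-type or diagonal-type) with $\codim(V^{tw})=\codim(V^w)-1$; induction then gives $\ell_R(w)\le\codim(V^w)$, and with the universal bound this yields equality. The point is that $G(m,1,n)$ contains enough reflections of both kinds to lower the codimension one unit at a time.

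The substantial direction is the converse: for every irreducible $W$ that is neither Coxeter nor some $G(m,1,n)$, I must exhibit an element with $\codim(V^w)<\ell_R(w)$, and I expect this to be the main obstacle, since the universal inequality only bounds $\ell_R$ from below, so establishing a strict gap requires ruling out all short factorizations. By Shephard--Todd the remaining groups are $G(m,p,n)$ with $p>1$ (excluding the Coxeter cases $G(2,2,n)$ and $G(m,m,2)$) together with the non-Coxeter exceptionals $G_4,\dots,G_{37}$. For $G(m,m,n)$ with $m\ge 3$ and $n\ge 3$ there is a clean obstruction: the only reflections are of transposition-type, each with an odd (transposition) permutation part, so any diagonal element is a product of an even number of them; taking $w=\mathrm{diag}(\zeta^{a_1},\zeta^{a_2},\zeta^{a_3},1,\dots,1)$ with nonzero $a_i$ summing to $0\pmod m$ gives $\codim(V^w)=3$ while $\ell_R(w)$ is even, hence $\ell_R(w)\ge 4$. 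A refined version of this counting obstruction, now accounting for the diagonal reflections whose color is a multiple of $p$, should handle the range $1<p<m$. For the finitely many exceptional groups I would not expect a uniform argument, and would instead verify the existence of a violating element by direct computation in each, for instance using the explicit reflection representations available in CHEVIE.
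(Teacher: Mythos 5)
The paper itself offers no proof of this proposition: it is imported as a known classification from Foster--Greenwood \cite{Foster-Greenwood}, so your proposal is being measured against the cited literature rather than an internal argument, and it holds up as an essentially correct reconstruction of that proof. Your reduction lemma is right and cleanly argued: $\codim(V^w)\le\ell_R(w)$ follows from $V^{t_1}\cap\cdots\cap V^{t_k}\subseteq V^w$, a reduced word gives a chain of length $\ell_R(w)$ from $\mathrm{id}$ to $w$ in $\pre(W)$, and since $\codim$ strictly increases along chains of $\co(W)$, equality of the two orders forces the pointwise identity $\ell_R(w)=\codim(V^w)$. Your $G(m,1,n)$ verification is also sound, and in effect it reproves Shi's formula (Proposition \ref{prop:lenm1n}): the eigenvalue computation gives $\codim(V^w)=n-t_0(w)$, and your one-step codimension reduction (split off a zero-sign singleton with a transposition-type reflection, or kill a nonzero singleton color with a diagonal reflection) is exactly the inductive mechanism appearing in the proof of Theorem \ref{thm:m1n}. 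The parity obstruction for $G(m,m,n)$, $m\ge3$, $n\ge3$, is correct, since that group has no diagonal reflections. Of the two pieces you leave as sketches, the case $1<p<m$ does complete along the line you indicate: take $w=\mathrm{diag}(\zeta,\zeta^{p-1},1,\dots,1)\in G(m,p,n)$, whose colors sum to $p$, so $w$ lies in the group and $\codim(V^w)=2$; a product of two diagonal reflections has both colors divisible by $p$ (but $p\nmid 1$), a diagonal reflection times a transposition-type reflection is not diagonal, and two transposition-type reflections whose product is diagonal must share their support and yield colors $(c,-c)$, impossible since $1+(p-1)=p\not\equiv0\pmod m$; hence $\ell_R(w)\ge3>\codim(V^w)$. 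Deferring the $34$ exceptional groups to direct computation is not a gap in kind, as Foster--Greenwood likewise disposes of them case by case. In short: your route is correct and matches the spirit of the cited source, while the paper's ``proof'' is simply the citation; what your write-up adds over the paper is the explicit equivalence between equality of the posets and equality of the two rank functions, plus explicit violating elements in the infinite families.
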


We follow Huang, Lewis, and Reiner's convention \cite{Huang} by using the term \emph{absolute order} to refer to the codimension and prefix orders when they agree (in other parts of the literature, ``absolute order" is used to refer to what we call the prefix order).

The functions $\ell_R(w)$ and $\codim(V^{w})$ are both \emph{subadditive}; this means that for any $u,v \in W$
\begin{align} \label{eq:subadditive}
\ell_R(uv) &\leq \ell_R(u) + \ell_R(v), \\
\codim(V^{uv}) & \leq \codim(V^u) + \codim(V^v).
\end{align}

Now, for a reducible reflection group $W \times W' \subset GL(V \oplus V')$ it is clear that $\ell_R((w,w'))=\ell_R(w)+\ell_R(w')$ and $\codim((V \oplus V')^{(w,w')})=\codim(V^w)+\codim(V^{w'})$.  This implies that for either the prefix or codimension order, if $u \leq v$ and $u' \leq v'$ then $(u,u') \leq (v,v')$.  The reverse implication then follows by subadditivity.  Together these facts imply:

\begin{prop} \label{prop:product-of-groups-has-product-order}
Let $W \times W' \subset GL(V \oplus V')$ be a reducible reflection group.  Then $\pre(W \times W') \cong \pre(W) \times \pre(W')$ and $\co(W \times W') \cong \co(W) \times \co(W')$.
\end{prop}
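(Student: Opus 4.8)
The plan is to reduce both isomorphisms to the additivity of the two rank functions on the product group, recorded just above the statement, together with their subadditivity (\ref{eq:subadditive}). I would first introduce the \emph{prefix defect} $d(u,v) := \ell_R(u) + \ell_R(u^{-1}v) - \ell_R(v)$ and the analogous \emph{codimension defect} obtained by replacing $\ell_R(\cdot)$ with $\codim(V^{\cdot})$ throughout. Applying subadditivity to the factorization $v = u\cdot(u^{-1}v)$ shows that both defects are always nonnegative, and by the definitions (\ref{eq:prefix-order}) and (\ref{eq:codimension-order}) one has $u \le v$ in $\pre(W)$ (respectively $\co(W)$) precisely when the corresponding defect vanishes.

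The main computation is to verify that each defect splits as a sum over the two factors. Using $(u,u')^{-1}(v,v') = (u^{-1}v,\, u'^{-1}v')$ and the additivity $\ell_R((w,w')) = \ell_R(w) + \ell_R(w')$, I would expand $d((u,u'),(v,v'))$ and collect terms to obtain
\[
d\big((u,u'),(v,v')\big) = d(u,v) + d(u',v'),
\]
and the identical manipulation, now invoking $\codim\big((V\oplus V')^{(w,w')}\big) = \codim(V^w) + \codim(V^{w'})$, yields the same splitting for the codimension defect.

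The conclusion is then immediate. Since each summand on the right of the displayed identity is nonnegative, the left-hand side vanishes if and only if both summands do. Hence $(u,u') \le (v,v')$ in $\pre(W \times W')$ if and only if $u \le v$ in $\pre(W)$ and $u' \le v'$ in $\pre(W')$, which is exactly the defining relation of the product poset; the same holds verbatim for $\co$. As both posets share the underlying set $W \times W'$, this establishes the two order isomorphisms.

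I do not anticipate any serious obstacle: the single point that needs care is the appeal to nonnegativity of the defects, which is what upgrades the additive identity ``defect of the pair equals the sum of the defects'' into the simultaneous vanishing required by the product order. The additivity of $\ell_R$ and $\codim$ across an orthogonal direct sum that drives the argument is elementary and is already asserted in the paragraph preceding the statement.
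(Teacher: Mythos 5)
Your proposal is correct and is essentially the paper's own argument: the paper also combines the additivity of $\ell_R$ and $\codim$ across the direct sum (one direction) with subadditivity (the reverse direction), and your defect formalism merely packages these same two ingredients into the single identity $d\bigl((u,u'),(v,v')\bigr)=d(u,v)+d(u',v')$ with nonnegative summands. No gaps; this is a clean write-up of the same approach.
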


\subsection{Rank generating functions} \label{subsec:rank-functions}
For $P$ a finite ranked poset, we let 
\[
F(P,q)=\sum_{i=0}^{\rank(P)} |P_i| \cdot q^i
\]
denote the rank generating function of $P$.  It is known that for any complex reflection group $W$ the codimension generating function
\begin{equation} \label{eq:codimension-generating-function}
C(W,q)=\sum_{i=0}^{\rank(W)} |\{w \in W | \codim(V^w)=i\}| \cdot q^i
\end{equation}
is equal to $(1+e_1q)\cdots (1+e_nq)$ where the $e_i$'s are positive integer invariants of $W$ called the \emph{exponents} (see Solomon \cite{Solomon} for a uniform proof).  For general complex reflection groups, the rank of $w$ in $\co(W)$ is not necessarily equal to $\codim(V^w)$ (for example, Foster-Greenwood \cite{Foster-Greenwood} gives examples of elements in rank one of $\co(W)$ with fixed-space codimension two), so that $C(W,q) \neq F(\co(W),q)$ in general.  However, $\pre(W)$ is always ranked by $\ell_R$, and thus when $\pre(W)=\co(W)$ we have 
\begin{equation} \label{eq:rank-generating-function}
F(\abs(W),q)=\prod_{i=1}^{n} \left(1+e_iq \right).
\end{equation}
This fact demonstrates the common theme that both $\pre(W)$ and $\co(W)$ are more tractable when they agree.

\subsection{The normalized flow property} \label{sec:normalized-flow}
The main tool that we will be using to establish the Sperner property of a poset is the theory of normalized flows, developed by Harper \cite{Harper1974}, and we will be mainly following his notation in this section. 

Let $G=(V=A \sqcup B,E)$ be a bipartite graph, equipped with a weight function $\nu:V\rightarrow\R_{\geq0}$. We consider $\nu$ as a measure. Namely, for any subset $ X \subset V$, let $\nu(X):=\sum_{x\in X}\nu(x)$. A \textit{normalized flow} on $G$, with respect to $\nu$, is a map $f:E\rightarrow \R_{\geq0}$ defined on the set of edges of $G$, such that for any $a\in A$ we have 
\[
\sum_{b\in D(a)}f(a,b)=\nu(a)/\nu(A),
\]
and for any $b\in B$ we have 
\[
\sum_{a\in D(b)}f(a,b)=\nu(b)/\nu(B),
\] 
where $D(x)$ denote the set of neighbors of $x$. 

Now let $P$ be a ranked poset with rank decomposition $P_0 \sqcup P_1 \sqcup \cdots \sqcup P_r$, with a weight function $\nu:P\rightarrow\R_{\geq0}$. We say that $f:E\rightarrow\R_{\geq0}$ is a \textit{normalized flow} on $P$ with respect to $\nu$, if the restriction of $f$ to the bipartite graph consisting of $P_i$ and $P_{i+1}$ and the covering relations between them is a normalized flow for each $i$.  Normalized flows will be useful to us thanks to the following theorem:

\begin{theorem}[Corollary to Theorem III of \cite{Harper1974}]\label{thm:nfsperner}
If a ranked poset $P$ has a normalized flow with respect to the weights $\nu \equiv 1$, then $P$ is strongly Sperner.
\end{theorem}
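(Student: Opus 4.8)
The plan is to route through the \emph{normalized matching property} (NMP): a ranked poset $P$ has NMP if for every $i$ and every $A \subseteq P_i$, the set $\nabla A \subseteq P_{i+1}$ of elements covering some member of $A$ satisfies $|\nabla A|/|P_{i+1}| \ge |A|/|P_i|$. First I would show that a normalized flow with $\nu \equiv 1$ forces NMP, and then I would conclude the strong Sperner property from NMP. The first implication is short and is where the flow hypothesis is spent; the second is the combinatorial heart and is the part I expect to be genuinely hard.

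\emph{Flow $\Rightarrow$ NMP.} Fix $i$ and $A \subseteq P_i$ and restrict to the bipartite layer between $P_i$ and $P_{i+1}$. Summing the source condition over $A$, the total flow leaving $A$ equals $\sum_{a \in A} 1/|P_i| = |A|/|P_i|$, and every unit of it travels into $\nabla A$. Summing the sink condition over $\nabla A$ bounds the flow those vertices can absorb, even from all of $P_i$, by $\sum_{b \in \nabla A} 1/|P_{i+1}| = |\nabla A|/|P_{i+1}|$. Comparing the two yields $|A|/|P_i| \le |\nabla A|/|P_{i+1}|$, which is exactly NMP; the same computation with sources and sinks exchanged gives the dual downward inequality.

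\emph{NMP $\Rightarrow$ strongly Sperner.} The normalized inequality implies Hall's condition in each bipartite layer, so each layer admits a matching saturating its smaller rank. The substance of the argument is that these layerwise matchings can be chosen \emph{coherently} so as to assemble into a single partition $\mc C$ of $P$ into chains whose multiset of lengths is the conjugate partition of the weakly decreasing sequence of rank-sizes; equivalently, the number of chains $C \in \mc C$ with $|C| \ge j$ equals the $j$th largest rank-size $m_j$. Granting such a $\mc C$, an elementary count — any union of $k$ antichains meets a chain $C$ in at most $\min(k,|C|)$ elements — gives, for every $k$,
\[
d_k(P) \;\le\; \sum_{C \in \mc C} \min(k,|C|) \;=\; \sum_{j=1}^{k} \#\{C \in \mc C : |C| \ge j\} \;=\; \sum_{j=1}^{k} m_j,
\]
where $d_k(P)$ is the maximum size of a union of $k$ antichains; since the $k$ largest ranks already form such a union of size $\sum_{j \le k} m_j$, equality holds for all $k$ and $P$ is strongly Sperner.

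\emph{Main obstacle.} The one genuinely hard step is the coherent assembly of the chain partition $\mc C$ above, which is the content of Harper's Theorem III. A saturating matching in a single layer is immediate from Hall, but the strong Sperner property for all $k$ at once demands that the matchings nest into one chain partition with the exact conjugate length profile — and by Dilworth-type duality the existence of such a $\mc C$ is in fact \emph{equivalent} to strong Sperner, so this construction is the entire game, not a convenience. This is precisely what the normalized flow is engineered to certify: it supplies a global fractional chain structure, and the integrality of the associated transportation/chain-packing polytope (a max-flow/min-cut phenomenon) upgrades it to the required integral family of nested chains. I expect essentially all the difficulty to concentrate here, with the flow $\Rightarrow$ NMP reduction and the final bookkeeping being routine.
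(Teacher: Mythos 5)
Your opening reduction is correct but buys nothing, and the step you defer is precisely the whole theorem, so the proposal has a genuine gap at its declared crux. (Note that the paper does not prove Theorem \ref{thm:nfsperner} at all: it is quoted as a corollary of Theorem III of \cite{Harper1974}.) By the feasible-flow (max-flow/min-cut) theorem, a bipartite layer admits a normalized flow with $\nu \equiv 1$ \emph{if and only if} the normalized matching inequality holds on that layer; so ``flow $\Rightarrow$ NMP'' is an equivalence that discards no difficulty, and ``NMP $\Rightarrow$ strongly Sperner'' is literally a restatement of the theorem to be proved. All of the content thus sits in the step you label the main obstacle, and there the appeal to ``integrality of the associated transportation/chain-packing polytope (a max-flow/min-cut phenomenon)'' does not work as stated.

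Concretely, three things go wrong. First, the chains in the desired partition $\mc{C}$ cannot in general be taken saturated, i.e.\ they are not paths in the layered network on which your flow lives: take the graded poset with rank sizes $2,1,2$ (minimal elements $a_1,a_2$, one middle element $b$, maximal elements $c_1,c_2$); it satisfies NMP, and the required profile forces exactly two chains, of sizes $3$ and $2$, so the size-$3$ chain is some $a_i < b < c_j$ and the second chain $\{a_{3-i} < c_{3-j}\}$ necessarily skips rank $1$. Hence the object to be built is a path packing in the full comparability digraph, not in the Hasse layers, and the composed layerwise flows do not even supply a fractional relaxation of it. Second, the defining constraint on $\mc{C}$ --- that the number of chains of length $\geq j$ equal the $j$th largest rank size for \emph{every} $j$ simultaneously --- is a condition on the multiset of path lengths, which is not expressible by flow-conservation and capacity constraints, so no standard transportation or network-integrality theorem applies to round a fractional solution. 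Third, the ``Dilworth-type duality'' you invoke to assert that such a $\mc{C}$ is equivalent to the strong Sperner property is really the Greene--Kleitman theory of $k$-saturated chain partitions, and the existence of a single \emph{completely} saturated partition (saturated for all $k$ at once) is a delicate matter, not elementary duality. Harper's own argument avoids all of this by working directly with the normalized flow (via his flow-morphism machinery, cf.\ Theorem \ref{thm:fundlemma}, mapping onto a weighted chain) and never produces an integral chain partition; to complete your proposal you would need either to reprove his Theorem III or to give an honest construction of the chain-partition assembly, which, as you yourself note, is the entire game.
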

An important advantage of using normalized flows is that they behave well under product. Let $P$ and $Q$ be two ranked posets with weight functions $\nu_P$ and $\nu_Q$ respectively. Their \textit{(Cartesian) product} $P\times Q$ is $\{(p,q):p\in P,\ q\in Q\}$ where the partial order is defined as $(p,q)\leq (p',q')$ if $p\leq p'$ in $P$ and $q\leq q'$ in $Q$, with a weight function $\nu_{P\times Q}((p,q))=\nu_P(p)\cdot \nu_Q(q)$. We say that a ranked poset $P$ with rank decomposition $P_0 \sqcup P_1 \sqcup \cdots \sqcup P_r$ is \textit{log-concave} with respect to a weight function $\nu$, if $\nu(P_i)^2\geq \nu(P_{i-1})\nu(P_{i+1})$ for all $i$.
\begin{theorem}[Theorem I.C of \cite{Harper1974}]\label{thm:nfproduct}
Let $P$ and $Q$ be two ranked posets that are log-concave with respect to weight functions $\nu_P$ and $\nu_Q$. If both of them have normalized flows, then their product also has a normalized flow and is log-concave.
\end{theorem}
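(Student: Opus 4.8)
The plan is to treat the two conclusions separately, beginning with log-concavity, which is the easier half. Write $a_i=\nu_P(P_i)$, $b_j=\nu_Q(Q_j)$, and $c_k=\nu_{P\times Q}((P\times Q)_k)$. Since $(P\times Q)_k=\bigsqcup_{i+j=k}P_i\times Q_j$ and the product weight is multiplicative, we have $c_k=\sum_{i+j=k}a_ib_j$; equivalently $\sum_k c_kq^k=\left(\sum_i a_iq^i\right)\left(\sum_j b_jq^j\right)$. Thus $(c_k)$ is the convolution of the log-concave sequences $(a_i)$ and $(b_j)$, and I would invoke the classical fact that a product of log-concave polynomials with nonnegative coefficients and no internal zeros is again log-concave. (In the intended application all ranks are nonempty and $\nu\equiv1$, so positivity is automatic.)

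For the normalized flow, first note that the covering relations of $P\times Q$ are of two kinds: \emph{$P$-edges} $(p,q)\lessdot(p',q)$ with $p\lessdot p'$, and \emph{$Q$-edges} $(p,q)\lessdot(p,q')$ with $q\lessdot q'$, and that the bipartite graph between ranks $k$ and $k+1$ is independent of the others, so the flow may be built one level at a time. Given the factor flows $f_P,f_Q$, I would seek scalars $\gamma_{i,j},\delta_{i,j}\ge0$ (one pair per cell $P_i\times Q_j$) and set, for $p\in P_i$, $q\in Q_j$,
\[
f((p,q),(p',q))=\gamma_{i,j}\,\frac{\nu_Q(q)}{b_j}\,f_P(p,p'),\qquad f((p,q),(p,q'))=\delta_{i,j}\,\frac{\nu_P(p)}{a_i}\,f_Q(q,q').
\]
Because $f_P$ and $f_Q$ are already normalized within each factor, summing the out-flow (resp.\ in-flow) at a single vertex collapses the factor sums and reduces the vertexwise normalized-flow equations to the cellwise conditions
\[
\gamma_{i,j}+\delta_{i,j}=m_{i,j}\quad\text{and}\quad\gamma_{i-1,j}+\delta_{i,j-1}=m_{i,j},\qquad m_{i,j}:=\frac{a_ib_j}{c_{i+j}},
\]
where the first family comes from the up-conditions and the second from the down-conditions; for the single level between ranks $k$ and $k+1$ the unknowns are exactly $\{\gamma_{i,j},\delta_{i,j}:i+j=k\}$.

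Fixing a level $k$ and eliminating $\delta$ via $\delta_{i,j}=m_{i,j}-\gamma_{i,j}$, the down-conditions become a telescoping recursion along the anti-diagonal $i+j=k$; solving it (with the boundary cells, where only one of $\gamma,\delta$ exists, supplying the endpoints) yields the closed form
\[
\gamma_{i,k-i}=\frac{S_i}{c_k}-\frac{T_i}{c_{k+1}},\qquad \delta_{i,k-i}=\frac{T_i}{c_{k+1}}-\frac{S_{i-1}}{c_k},
\]
where $S_i=\sum_{l\le i}a_lb_{k-l}$ and $T_i=\sum_{l\le i}a_lb_{k+1-l}$ are partial sums of rank weights along the two anti-diagonals. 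One checks directly that these satisfy every up- and down-condition, the two boundary constraints being compatible thanks to the convolution identities $\sum_{i+j=k}a_ib_j=c_k$.

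The crux — and the only place the hypotheses are used — is nonnegativity of the resulting flow, which I expect to be the main obstacle: the construction is essentially forced once one posits the product ansatz, but verifying that the forced values stay nonnegative is precisely what log-concavity is for. Here $\gamma_{i,k-i}\ge0$ is equivalent to $T_i/S_i\le c_{k+1}/c_k$. The corresponding terms of $T_i$ and $S_i$ have ratio $a_lb_{k+1-l}/(a_lb_{k-l})=b_{k+1-l}/b_{k-l}$, which by log-concavity of $Q$ is nondecreasing in $l$; hence the partial-sum ratio $T_i/S_i$ is nondecreasing in $i$ (a monotone-likelihood-ratio inequality) and so is bounded above by its value at the end of the anti-diagonal, namely $c_{k+1}/c_k$. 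The inequality $\delta_{i,k-i}\ge0$ follows from the symmetric argument after interchanging the roles of $P$ and $Q$ (and of $\gamma$ and $\delta$), now invoking log-concavity of $P$. Thus both log-concavity hypotheses are genuinely needed, and together they produce a bona fide normalized flow on $P\times Q$; combined with the first paragraph this gives the theorem.
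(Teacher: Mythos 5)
Your proposal is correct, but a comparison with ``the paper's proof'' is moot here: this statement is imported verbatim from Harper (Theorem I.C of \cite{Harper1974}) and the paper supplies no proof of it, so the right benchmark is Harper's original argument. Your argument is essentially a self-contained reconstruction of it. Harper proceeds by first proving the theorem for a product of two weighted chains and then lifting via the Fundamental Lemma (the paper's Theorem \ref{thm:fundlemma}): the rank-coordinate map $P\times Q\to \bar P\times\bar Q$ onto the product of the weighted rank-quotient chains is surjective, rank- and measure-preserving, and the fiber over each covering relation is a product-type bipartite graph carrying an obvious flow built from $f_P$ and $f_Q$. Your product ansatz $f((p,q),(p',q))=\gamma_{i,j}\frac{\nu_Q(q)}{b_j}f_P(p,p')$, etc., performs exactly this reduction by hand: your cellwise system $\gamma_{i,j}+\delta_{i,j}=m_{i,j}$, $\gamma_{i-1,j}+\delta_{i,j-1}=m_{i,j}$ is precisely the normalized-flow condition on the chain product $\bar P\times\bar Q$, your telescoped closed form in terms of the partial sums $S_i,T_i$ solves it, and your monotone-ratio argument (term ratios $b_{k+1-l}/b_{k-l}$ nondecreasing by log-concavity of $Q$, hence partial-sum ratios $T_i/S_i$ bounded by $c_{k+1}/c_k$; symmetrically for $\delta$ using log-concavity of $P$) is the correct and standard way nonnegativity is extracted from the two log-concavity hypotheses. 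I verified your boundary behavior: with the conventions $a_l=b_j=0$ outside the rank ranges and $S_{-1}=0$, the closed form automatically gives $\gamma=0$ (resp.\ $\delta=0$) on cells with no $P$-edges (resp.\ $Q$-edges) up, e.g.\ $\gamma_{r,k-r}=c_k/c_k-c_{k+1}/c_{k+1}=0$, so the forced endpoint values are consistent. Two small points worth making explicit: the existence of normalized flows on the factors already forces all rank weights $a_i,b_j$ to be positive (the defining equations divide by them), which both legitimizes your divisions and removes the internal-zeros caveat in the log-concavity-of-convolution step (indeed log-concavity of a nonnegative sequence with positive endpoints of support forbids internal zeros outright); and your phrase ``interchanging $P$ and $Q$'' for $\delta\ge0$ deserves one line of justification, since the honest symmetric statement involves reversing the direction of the anti-diagonal, or equivalently the direct reindexing $T_i=\sum_{l\le i-1}a_{l+1}b_{k-l}$ with term ratios $a_l/a_{l+1}$ nondecreasing. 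Neither point is a gap; the proof stands.
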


Another useful property of normalized flow is described as ``the Fundamental Lemma" by Harper \cite{Harper1974}; we reformulate it here: 

\begin{theorem}[Lemma I.B of \cite{Harper1974}]\label{thm:fundlemma}
Let $\ph:P\rightarrow Q$, with weight functions $\nu_P$ on $P$ and $\nu_Q$ on $Q$, be a surjective, measure-preserving and rank-preserving map of ranked posets. If $Q$ admits a normalized flow with weights $\nu_Q$ and for each covering relation $q\lessdot q'$ on $Q$, the induced bipartite graph on $\ph^{-1}(\{q\})$ and $\ph^{-1}(\{q'\})$ admits a normalized flow with weights $\nu_P$, then $P$ admits a normalized flow with weights $\nu_P$.
\end{theorem}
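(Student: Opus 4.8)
The plan is to build the normalized flow on $P$ directly, by multiplying the given flow on $Q$ with the fiberwise flows supplied over each edge of $Q$. Write $g$ for the normalized flow on $Q$ (with weights $\nu_Q$), and for each covering relation $q\lessdot q'$ in $Q$ write $h_{q,q'}$ for the normalized flow (with weights $\nu_P$) on the bipartite graph induced on the fibers $\ph^{-1}(q)$ and $\ph^{-1}(q')$. The first observation is that, because $\ph$ is order- and rank-preserving, every covering relation $p\lessdot p'$ in $P$ satisfies $\ph(p)\lessdot\ph(p')$ in $Q$: indeed $\ph(p)\leq\ph(p')$ and their ranks differ by exactly one. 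Consequently the bipartite graph of covering relations between $P_i$ and $P_{i+1}$ is the disjoint edge-union, over covering relations $q\lessdot q'$ with $q\in Q_i$, of the fiber bipartite graphs on $\ph^{-1}(q)$ and $\ph^{-1}(q')$. This lets me define
\[
f(p,p') := g(\ph(p),\ph(p'))\cdot h_{\ph(p),\ph(p')}(p,p')
\]
on each edge $p\lessdot p'$ of $P$, and it then remains only to verify the two normalized-flow equations.

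For the outgoing condition, I would fix $p\in P_i$, set $q=\ph(p)$, group the neighbors $p'$ of $p$ according to the value $q'=\ph(p')$, and factor out $g(q,q')$ to obtain
\[
\sum_{p'\in D(p)} f(p,p') = \sum_{q'\gtrdot q} g(q,q')\sum_{\substack{p'\in D(p)\\ \ph(p')=q'}} h_{q,q'}(p,p').
\]
The inner sum runs over all neighbors of $p$ in the fiber bipartite graph over $q\lessdot q'$, so by the fiber-flow condition it equals $\nu_P(p)/\nu_P(\ph^{-1}(q))$; since $\ph$ is measure-preserving, $\nu_P(\ph^{-1}(q))=\nu_Q(q)$. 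Pulling this constant out and applying the flow condition for $g$ at $q$ yields
\[
\sum_{p'\in D(p)} f(p,p') = \frac{\nu_P(p)}{\nu_Q(q)}\sum_{q'\gtrdot q} g(q,q') = \frac{\nu_P(p)}{\nu_Q(q)}\cdot\frac{\nu_Q(q)}{\nu_Q(Q_i)} = \frac{\nu_P(p)}{\nu_Q(Q_i)}.
\]
The incoming condition at a fixed $p'\in P_{i+1}$ is checked by the entirely symmetric computation, producing $\nu_P(p')/\nu_Q(Q_{i+1})$.

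To finish, I would match these denominators against the ones required for a normalized flow on $P$, namely $\nu_P(P_i)$ and $\nu_P(P_{i+1})$. Summing measure-preservation over a single rank gives $\nu_P(P_i)=\sum_{q\in Q_i}\nu_P(\ph^{-1}(q))=\sum_{q\in Q_i}\nu_Q(q)=\nu_Q(Q_i)$, where the first equality uses rank-preservation (the fibers over $Q_i$ partition $P_i$). Hence $\nu_P(P_i)=\nu_Q(Q_i)$ for every $i$, the denominators agree, and $f$ is a normalized flow on $P$. There is no deep obstacle in this argument; the only point requiring genuine care is the bookkeeping of normalizations — in particular verifying that measure-preservation together with rank-preservation forces the rank-total identity $\nu_P(P_i)=\nu_Q(Q_i)$, which is exactly what makes the locally defined fiber flows glue into a properly normalized global flow rather than merely one that is correct up to a rank-dependent scalar.
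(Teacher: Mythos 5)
Your proof is correct: the paper itself gives no proof of this statement (it is stated as a reformulation of Lemma I.B of Harper's 1974 paper), and your construction $f(p,p')=g(\ph(p),\ph(p'))\, h_{\ph(p),\ph(p')}(p,p')$ is exactly the standard product-of-flows argument behind Harper's Fundamental Lemma. You correctly isolate the two points that genuinely need checking---that rank- and order-preservation force every cover of $P$ to lie over a cover of $Q$, so the bipartite graph between $P_i$ and $P_{i+1}$ decomposes as a disjoint edge-union of fiber graphs, and that measure-preservation plus rank-preservation yield $\nu_P(P_i)=\nu_Q(Q_i)$, which is what makes the locally normalized fiber flows glue into a globally normalized flow.
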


\section{Proof of Theorem \ref{thm:main-theorem}} \label{sec:proof}

\subsection{The generalized symmetric groups $G(m,1,n)$}\label{sec:m1n}
We first review some background on the complex reflection groups $G(m,p,n)$. Any $w\in G(m,p,n)$ can be expressed in the form $w=[a_1,\ldots,a_n|\sigma]$ where each $a_i \in\Z/m\Z$ and $p$ divides $\sum_{i=1}^na_i$. Note that we always require $p|m$. Naturally, we can view such $w=[a_1,\ldots,a_n|\sigma]$ as an element in $GL(\C^n)$ that sends the $k^{th}$ coordinate vector $v_k$ to $\exp(\frac{2\pi\sqrt{-1}a_k}{m})v_{\sigma(k)}.$ Correspondingly, we can also think of such $w$ as a permutation on $(\Z/m\Z)\times[n]$ such that $w(b,k)=(b+a_k,\sigma(k))$. There are two types of reflections in $G(m,p,n)$, which we call \textit{type (1)} and \textit{type (2)}:
\begin{enumerate}
\item $\sigma=(i,j)$ is a transposition for some $i<j$, $a_i=-a_j$ and $a_k=0$ for $k\neq i,j$;
\item $\sigma=\mathrm{id}$, $p|a_i\neq0$ for some $i\in[n]$ and $a_k=0$ for $k\neq i$.
\end{enumerate}

Shi \cite{shi} gives explicit formulae for the reflection length $\ell_R$ in $G(m,p,n)$ and we will be using his results for the special case $p=1$. For $w=[a_1,\ldots,a_n|\sigma]\in G(m,1,n)$, we can write $\sigma=\sigma^{(1)}\cdots\sigma^{(r)}$ in disjoint cycle notation. For $i=1,\ldots,r$, define the \textit{sign} of the cycle $\sigma^{(i)}$ to be $\sgn(\sigma^{(i)})=\sum_{j\in\sigma^{(i)}}a_j\in\Z/m\Z$. Let $t_0(w)=\#\{i\in[r]:\sgn(\sigma^{(i)})=0\}$ be the number of cycles of $\sigma$ with sign 0.
\begin{prop}[Theorem 2.1 of \cite{shi}]\label{prop:lenm1n}
For $w\in G(m,1,n)$, 
\[
\ell_R(w)=n-t_0(w).
\]
\end{prop}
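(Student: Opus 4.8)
The plan is to prove Shi's formula $\ell_R(w) = n - t_0(w)$ by establishing matching upper and lower bounds. I would first set up the combinatorial picture: writing $w = [a_1,\ldots,a_n \mid \sigma]$ and decomposing $\sigma = \sigma^{(1)}\cdots\sigma^{(r)}$ into disjoint cycles, I would observe that the reflection length is additive over the ``blocks'' determined by the cycles, since reflections of type (1) only permute coordinates within a single cycle's support and type (2) reflections act on a single coordinate. Thus it suffices to analyze a single cycle: if $\sigma^{(i)}$ is a cycle on a support of size $c$, I claim its contribution to $\ell_R(w)$ is $c-1$ when $\sgn(\sigma^{(i)}) = 0$ and $c$ when $\sgn(\sigma^{(i)}) \neq 0$. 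Summing over all $r$ cycles (whose supports partition $[n]$, so $\sum c_i = n$) then gives $\ell_R(w) = \sum_i (c_i - [\sgn(\sigma^{(i)})=0]) = n - t_0(w)$.

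For the single-cycle analysis I would prove the upper bound constructively. Consider a cycle on $\{k_1,\ldots,k_c\}$. Using $c-1$ type-(1) reflections one can reduce the permutation part to the identity on this block while accumulating the total charge $\sum a_{k_j}$ onto a single coordinate; concretely, multiplying by transpositions $(k_1,k_2),(k_2,k_3),\ldots$ with appropriately chosen charge parameters untangles the cycle in $c-1$ steps, leaving a single nonzero charge equal to $\sgn(\sigma^{(i)})$ at one position. If $\sgn(\sigma^{(i)}) = 0$ this remaining element is the identity, so $c-1$ reflections suffice. If $\sgn(\sigma^{(i)}) \neq 0$ one additional type-(2) reflection clears the leftover charge, giving $c$ reflections. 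This yields $\ell_R(w) \le n - t_0(w)$.

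For the lower bound I would use the codimension of the fixed space as a subadditive lower bound on reflection length: since each reflection has fixed space of codimension one, subadditivity gives $\ell_R(w) \ge \codim(V^w)$, and this holds for all complex reflection groups. I would then compute $\codim(V^w)$ directly from the cyclic structure. On a single cyclic block of size $c$, the matrix acting on $\C^c$ is a scaled cyclic permutation whose eigenvalues are the $c$-th roots of $\exp(2\pi\sqrt{-1}\,\sgn(\sigma^{(i)})/m)$; the fixed space is nonzero (one-dimensional) precisely when $1$ is among these eigenvalues, i.e.\ exactly when $\sgn(\sigma^{(i)}) = 0$. Hence the block contributes codimension $c-1$ if the sign is zero and $c$ otherwise, so $\codim(V^w) = n - t_0(w)$, matching the upper bound.

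The main obstacle is the explicit untangling in the upper bound: one must exhibit, for each cyclic block, a reduced word of the asserted length with the charges tracked carefully through $\Z/m\Z$ so that the accumulated charge lands correctly and the type-(1) reflections remain genuine reflections (requiring the charge-balance condition $a_i = -a_j$ in the block of two coordinates being swapped at each step). This bookkeeping is purely combinatorial but is the one place where a careless choice of intermediate charges breaks the argument. Once both bounds equal $n - t_0(w)$, the proposition follows. I note that this matches Shi's original statement, so one could alternatively cite \cite{shi} directly; the sketch above indicates how a self-contained proof proceeds.
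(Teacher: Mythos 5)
Your proposal is correct, but note that the paper does not prove this statement at all: it is quoted verbatim as Theorem 2.1 of Shi's paper, so your sketch is being compared against a citation rather than an in-text argument. What you give is essentially the standard sandwich argument (and, specialized to $p=1$, it is close in spirit to Shi's own proof and to Carter's classical argument in the Coxeter case): an upper bound by exhibiting, cycle by cycle, a word of $c-1$ type-(1) reflections that untangles a $c$-cycle while telescoping the charges, plus one type-(2) reflection when the accumulated charge $\sgn(\sigma^{(i)})$ is nonzero; and a lower bound $\ell_R(w)\geq\codim(V^w)$ from subadditivity of codimension, with $\codim(V^w)$ computed blockwise from the eigenvalues $x^c=\zeta^{s}$ of a scaled cyclic block. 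Both bounds come out to $n-t_0(w)$, and as a bonus your argument re-proves $\ell_R(w)=\codim(V^w)$ for $G(m,1,n)$, consistent with Proposition \ref{prop:codim-equals-prefix}. It is also structurally parallel to the existence-and-reducedness induction the paper does carry out inside the proof of Theorem \ref{thm:m1n}, where coordinates are peeled off one at a time by a unique reflection in $T_n$.

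One small caution: your opening claim that ``the reflection length is additive over the blocks'' is not justified by the reason you give, since a priori a \emph{minimal} reduced word for $w$ could use reflections mixing the supports of different cycles, so blockwise analysis only directly yields the \emph{upper} bound $\ell_R(w)\leq\sum_i\bigl(c_i-[\sgn(\sigma^{(i)})=0]\bigr)$. Your proof does not actually suffer from this, because the matching lower bound comes from $\codim(V^w)$, which genuinely is additive over blocks as a dimension count; but you should phrase the additivity claim as a consequence of the two bounds rather than as an input. With that rewording, and the routine verification that the charge parameters $c_1,\ldots,c_{c-1}$ can always be chosen in $\Z/m\Z$ so that the telescoping product realizes the prescribed charges (a bijective count: $m^{c-1}$ parameter choices against $m^{c-1}$ sum-zero charge vectors), the argument is complete and self-contained.
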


We note the following subword property of the prefix order of any complex reflection group, which follows from \cite{Huang} (or from \cite{Armstrong} in the case of Coxeter groups).
\begin{prop}\label{prop:subword}
If $w=t_1\cdots t_{\ell}$ is a reduced word for $w$, then any subword $u=t_{j_1}\cdots t_{j_k}$ where $1\leq j_1<\cdots<j_k\leq\ell$ is reduced and $u\leq w$ in $\pre(W)$.
\end{prop}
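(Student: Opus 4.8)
The plan is to reduce the statement about an arbitrary subword to the much easier statement about a \emph{prefix}, exploiting the fact that the set of reflections $T$ is closed under conjugation. Indeed, for any $t\in T$ and any $g\in W$ the element $gtg^{-1}$ fixes exactly $g\cdot V^t$, which has dimension $n-1$, so $gtg^{-1}\in T$ as well.

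First I would establish a sliding (commutation) lemma. The key identity is that for reflections $t,t'$ one has $t't = t\,(t^{-1}t't)$, where $t^{-1}t't\in T$ by the conjugation-closure just noted. Applying this repeatedly, I would move the chosen letters $t_{j_1},\dots,t_{j_k}$ to the front of the word one at a time, from left to right: sliding $t_{j_i}$ leftward past each unchosen letter to its left replaces that unchosen letter by a conjugate reflection while leaving $t_{j_i}$ itself unchanged. Since every such move preserves both the product and the number of letters, the outcome is an expression
\[
w = t_{j_1}\cdots t_{j_k}\, r_1\cdots r_{\ell-k},
\]
with each $r_m\in T$, again a product of exactly $\ell$ reflections.

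Next, because $\ell=\ell_R(w)$, this new expression is itself a reduced word for $w$, and $u=t_{j_1}\cdots t_{j_k}$ is literally a prefix of it. At this point I would invoke subadditivity \eqref{eq:subadditive}: from $w=u\cdot(u^{-1}w)$ with $u$ written in $k$ reflections and $u^{-1}w=r_1\cdots r_{\ell-k}$ written in $\ell-k$ reflections, we obtain
\[
\ell=\ell_R(w)\leq \ell_R(u)+\ell_R(u^{-1}w)\leq k+(\ell-k)=\ell.
\]
Equality must therefore hold throughout, forcing $\ell_R(u)=k$ (so the subword is reduced) and $\ell_R(u)+\ell_R(u^{-1}w)=\ell_R(w)$, which is precisely the relation $u\leq w$ in $\pre(W)$ by \eqref{eq:prefix-order}.

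I expect the only delicate point to be the bookkeeping in the sliding lemma: one must verify that moving a chosen letter to the left only ever conjugates the unchosen letters it passes, and never alters the other chosen letters, which lie to its right and are hence untouched, so that the first $k$ letters of the final word are exactly $t_{j_1},\dots,t_{j_k}$ in order. Everything else is a direct consequence of the conjugation-closure of $T$ and the subadditivity inequality already recorded in the excerpt.
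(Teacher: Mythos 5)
Your proof is correct, and it is essentially the standard argument behind this proposition: the paper itself gives no proof, deferring to \cite{Huang} (and \cite{Armstrong} for Coxeter groups), where the subword property is established by exactly this conjugation/shifting (``Hurwitz move'') technique followed by the subadditivity squeeze. You have also correctly handled the one subtlety of the general complex reflection group setting---reflections need not be involutions---by using the identity $t't = t\,(t^{-1}t't)$ together with closure of $T$ under conjugation, rather than the involutive form $t't = t\,(tt't)$ that is only valid for Coxeter groups.
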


We define the \textit{claw poset} of order $n$, denoted $C_n$, to be the ordinal sum of a single element with an antichain of size $n-1$. In other words, elements of $C_n$ are given by $x_0,x_1,\ldots,x_{n-1}$ and their orders relations are defined to be $x_0<x_i$ for all $i=1,\ldots,n-1$. For $n\geq2$, $C_n$ is a ranked poset of rank 2, which clearly has a normalized flow (with respect to $\nu\equiv1$). We are now ready to state the main result of the section.

\begin{theorem}\label{thm:m1n}
For $W=G(m,1,n)$, $\abs(W)$ has a coarsening (on the same underlying set $W$) isomorphic to $C_m\times C_{2m}\times\cdots\times C_{nm}$.
\end{theorem}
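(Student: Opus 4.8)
The plan is to exhibit an explicit bijection $\Phi$ from the underlying set of $C_m \times C_{2m} \times \cdots \times C_{nm}$ to $W=G(m,1,n)$ which is rank-preserving and sends every order relation of the claw product to a relation of $\abs(W)$; this realizes the claw product as a poset on the set $W$ with fewer relations, i.e. as a coarsening of $\abs(W)$. The guiding observation is that the $km-1$ rank-one elements of $C_{km}$ should match up with the $km-1$ reflections of $W$ supported on $[k]$ that involve the point $k$: the type-(1) transpositions with $\sigma=(i,k)$, $i<k$, of arbitrary color ($(k-1)m$ of these), together with the type-(2) reflections at position $k$ ($m-1$ of these). Setting $r_k(0)=\mathrm{id}$ and letting $r_k(b)$, for $b\in\{1,\ldots,km-1\}$, enumerate these reflections, I define
\[
\Phi(b_1,\ldots,b_n) = r_n(b_n)\,r_{n-1}(b_{n-1})\cdots r_1(b_1).
\]
Since each $r_j$ with $j\leq n-1$ fixes every position $>n-1$, the tail $r_{n-1}(b_{n-1})\cdots r_1(b_1)$ lies in $G(m,1,n-1)$, which is what makes a recursive analysis possible.

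First I would prove $\Phi$ is a bijection by a peeling argument that strips off the leftmost factor. Given $w\in W$, the factor $r_n(b_n)$ is forced by the behavior of $w$ at the point $n$: if $w$ fixes $n$ with color $0$ then necessarily $b_n=0$ and $w$ already lies in $G(m,1,n-1)$; otherwise $r_n(b_n)$ is the unique reflection --- of type (2) if $\sigma_w(n)=n$, and of type (1) with $\sigma=(\sigma_w(n),n)$ otherwise, in each case with color dictated by $a_n$ --- for which $r_n(b_n)^{-1}w$ fixes $n$ with color $0$ and hence lands in $G(m,1,n-1)$. Recursing on $r_n(b_n)^{-1}w$ recovers the remaining coordinates uniquely, and the tally $1+(km-1)=km=|C_{km}|$ of options at stage $k$ confirms that $\Phi$ is a bijection.

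Next I would establish that $\Phi$ is rank-preserving, i.e. $\ell_R(\Phi(b))=\#\{k:b_k\neq 0\}$, which is the rank of $b$ in the claw product. Subadditivity of $\ell_R$ gives $\ell_R(\Phi(b))\leq \#\{k:b_k\neq 0\}$ at once. For equality I would compare distributions: both $\sum_{w\in W}q^{\ell_R(w)}$ and $\sum_b q^{\#\{k:b_k\neq 0\}}$ equal $\prod_{k=1}^n(1+(km-1)q)$ --- the former by \eqref{eq:rank-generating-function} and the fact that the exponents of $G(m,1,n)$ are $m-1,2m-1,\ldots,nm-1$, the latter by inspection of the product --- so a pointwise inequality between two statistics with the same distribution must be an equality. (Alternatively, the peeling step together with Proposition~\ref{prop:lenm1n} yields $\ell_R(\Phi(b))=\ell_R(r_n(b_n)^{-1}w)+[b_n\neq 0]$ directly.) Order-preservation is then immediate from the subword property: for each $b$ the word $r_n(b_n)\cdots r_1(b_1)$, after discarding identity factors, has exactly $\ell_R(\Phi(b))$ letters and is therefore a reduced word for $\Phi(b)$. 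A covering relation $b\lessdot b'$ in the claw product flips one coordinate from $0$ to a nonzero value, so $\Phi(b)$ is obtained from the reduced word for $\Phi(b')$ by deleting a single letter; Proposition~\ref{prop:subword} then gives $\Phi(b)\leq \Phi(b')$ in $\pre(W)=\abs(W)$. As every relation of the claw product is a composite of such coverings, transitivity shows $\Phi$ carries all claw-product relations into $\abs(W)$, exhibiting the desired coarsening.

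The step I expect to require the most care is the well-definedness and injectivity of the peeling map in the type-(1) case: given that $w$ sends the point $n$ to position $j=\sigma_w(n)\neq n$ with color $a_n$, one must select the transposition $(j,n)$ carrying the unique color for which $r_n(b_n)^{-1}w$ fixes $n$ with color $0$, and then check that the contraction genuinely lies in $G(m,1,n-1)$ --- concretely, that the cycle through $n$ closes up with $n$'s color absorbed into the incoming step. This is a direct computation with the action on $(\Z/m\Z)\times[n]$, but the color arithmetic must be carried out attentively; once it is in place, the bijectivity, the rank statistic, and the reduced-word claim all follow, and the remaining verifications are routine.
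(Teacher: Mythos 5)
Your proposal is correct, and its skeleton is the same as the paper's: you use the identical partition of the reflections into sets $T_k$ (reflections whose support has largest index $k$, with $|T_k|=km-1$), the same peeling induction to obtain the canonical factorization with at most one factor from each $T_k$ (you peel the $T_n$-factor from the left, reading off $w$'s action at the point $n$, while the paper peels from the right via $w^{-1}$; this mirror-image convention is immaterial), and the same appeal to the subword property (Proposition \ref{prop:subword}) to carry cover relations of the claw product into $\abs(W)$. Where you genuinely diverge is the proof that the canonical word is reduced, i.e.\ that the bijection is rank-preserving: the paper argues directly with Shi's length formula (Proposition \ref{prop:lenm1n}), checking that each peeling step creates one new sign-zero cycle and so drops $\ell_R$ by exactly one, whereas your primary route combines subadditivity, $\ell_R(\Phi(b))\leq\#\{k:b_k\neq0\}$, with equidistribution: both statistics have generating function $\prod_{k=1}^n\bigl(1+(km-1)q\bigr)$ --- by \eqref{eq:rank-generating-function} together with the exponents $m-1,2m-1,\ldots,nm-1$ of $G(m,1,n)$ on one side, and by inspection on the other --- and a pointwise inequality between equidistributed statistics on a finite set forces pointwise equality. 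This trick is sound (you correctly establish bijectivity of $\Phi$ first, which it requires) and spares you the cycle-sign bookkeeping; its cost is that it leans on \eqref{eq:rank-generating-function}, hence on Solomon's formula and the Foster-Greenwood coincidence $\pre(W)=\co(W)$ for $G(m,1,n)$, which repackages essentially the same fact ($\ell_R=\codim$ on this family) that the paper extracts from Shi's formula directly; your parenthetical alternative is exactly the paper's argument. You also rightly flag the color arithmetic in the type-(1) peel as the delicate point, and your description of the forced reflection agrees with the paper's.
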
 
\begin{proof}
We partition all reflections of $G(m,1,n)$ into sets $T_1\sqcup\cdots\sqcup T_n$, where $T_j$ consists of the following reflections $t=[a_1,\ldots,a_k|\sigma]$:
\begin{enumerate}
\item $\sigma=(i,j)$ for some $i<j$, $a_i+a_j=0$ and $a_k=0$ for $k\neq i,j$, and 
\item $\sigma=\mathrm{id}$, $a_j\neq0$, $a_k=0$ for $j\neq k$.
\end{enumerate}
In $T_j$, there are $(j-1)m$ reflections of type (1) and $m-1$ reflections of type (2). So its cardinality is $|T_j|=jm-1$. 

We claim that for each $w\in G(m,1,n)$, there is a unique way to write $w=t_{i_1}t_{i_2}\cdots t_{i_k}$ such that $1\leq i_1<\cdots<i_k\leq n$ and $t_{i_j}\in T_{i_j}$. Moreover, such decomposition of $w$ is reduced, meaning $\ell_R(w)=k$. Proceed by induction on $n$. The claim is clear when $n=1$. Now assume $n\geq2$. All reflections in $T_1,\ldots,T_{n-1}$, viewed as permutations on $(\Z/m\Z)\times[n]$, keep $(b,n)$ fixed for any (or a specific) $b\in\Z/m\Z$, while all reflections in $T_n$ do not. Therefore, if $w$ fixes $(b,n)$, we cannot choose any $t_n\in T_n$, and induction hypothesis takes care of the rest of the argument. If $w$ doesn't fix $(b,n)$, we have to choose $t_n\in T_n$ such that $wt_n^{-1}$ fixes $(b,n)$. Let $w^{-1}(b,n)=(a+b,n')\neq(b,n)$ for some $a\in\Z/m\Z$, and all $b\in\Z/m\Z$. Then $t_n$ must map $(b,n)$ to $(a+b,n')$. It's not hard to see such $t_n=[a_1,\ldots,a_n|\sigma]$ is unique: if $n'\neq n$, then $t_n$ must be of type (1) with $\sigma=(n',n)$, $a_n=a$, $a_{n'}=-a$; if $n'=n$, then $a\neq0$ and $t_n$ must be of type (2) with $a_n=a$. Then $w'=wt_n^{-1}$ fixes $(b,n)$ and lies in $G(m,1,n-1)$. By induction, the uniqueness and existence of such decomposition are established. To see that the expression obtained in this way is reduced, let's consider the signs of cycles in $w$, as in Proposition~\ref{prop:lenm1n}. If $t_n$ is of type (1), multiplication by $t_n^{-1}$ on $w$ splits the cycle containing $n$ into two cycles, keeping the overall sign. But as one of the cycles is a singleton containing $n$ with sign 0, we know that $t_0(w')=t_0(w)+1$ so $\ell_R(w')=\ell_R(w)-1$ by Proposition~\ref{prop:lenm1n}. If $t_n$ is of type (2), then in $w$, $n$ is a singleton with nonzero sign. After multiplication by $t_n^{-1}$, $n$ becomes a singleton with sign 0 so similarly, $\ell_R(w')=\ell_R(w)-1$. The rest of the argument follows by induction.

For each $j=1,\ldots,n$, label the unique minimum element of the claw poset $C_{jm}$ by $\mathrm{id}$ and its elements in rank 1 by the elements of $T_j$, recalling that $|T_j|=jm-1$. Every element in $P=C_{m}\times C_{2m}\times\cdots\times C_{nm}$ is then labeled by a tuple $x=(x_1,x_2,\ldots,x_n)$ where $x_j\in T_j\sqcup\{\mathrm{id}\}$. We identify this element as $w(x)=x_1x_2\cdots x_n\in G(m,1,n)$. By the uniqueness and existence argument in the last paragraph, we obtain a bijection between elements in $P$ and $W=G(m,1,n)$. Moreover, the rank of $x$ in $P$ equals the number of $x_j$'s in $T_j$, which is precisely $\ell_R(w(x))$ as shown above. Finally, $x'\leq x$ in $P$ precisely means that $x_1'\cdots x_n'$ is a subword of $x_1\cdots x_n$, after ignoring identity terms, which implies that $w(x')\leq w(x)$ by the subword property (Proposition~\ref{prop:subword}). As a result, $P$ is contained in $\abs(W)$ as desired.
\end{proof}
The following corollary is immediate.
\begin{cor} \label{cor:G(m,1,n)-normalized-flow}
The absolute order of $G(m,1,n)$ has a normalized flow with $\nu\equiv1$, and is thus strongly Sperner.
\end{cor}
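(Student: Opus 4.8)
The plan is to combine Theorem~\ref{thm:m1n} with the product behavior of normalized flows. Write $P = C_m \times C_{2m} \times \cdots \times C_{nm}$, which by Theorem~\ref{thm:m1n} is a coarsening of $\abs(W)$ on the same underlying set $W$ and with the same rank function $\ell_R$. I would first produce a normalized flow on $P$ with respect to $\nu \equiv 1$, and then transfer it to $\abs(W)$.

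For the first step, each factor $C_{jm}$ has only two nonempty ranks, of sizes $1$ and $jm-1$; as noted in the text it carries a normalized flow with respect to $\nu \equiv 1$, and it is trivially log-concave, since there is no interior rank at which the log-concavity inequality imposes a nonvacuous condition. Applying Theorem~\ref{thm:nfproduct} inductively to the factors $C_m, C_{2m}, \ldots, C_{nm}$ then shows that $P$ itself carries a normalized flow with $\nu \equiv 1$ (and is log-concave).

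The one point requiring a moment's care---and the only real content beyond quoting the product theorem---is the transfer of this flow from the coarsening $P$ to $\abs(W)$. Because $P$ and $\abs(W)$ have the same underlying set and the same rank function, and because every order relation of $P$ is an order relation of $\abs(W)$, any covering relation $x \lessdot y$ of $P$ has $\ell_R(y) = \ell_R(x)+1$ and hence is also a covering relation of $\abs(W)$ (in a ranked poset, a relation between elements of adjacent ranks is automatically a cover). Thus the bipartite graph of covers of $P$ between two consecutive ranks is a subgraph of the corresponding bipartite graph for $\abs(W)$ on the identical vertex sets. Extending the flow on $P$ by assigning value $0$ to every cover of $\abs(W)$ that is not a cover of $P$, the defining sums at each vertex are unchanged, and since $\nu \equiv 1$ makes $\nu(P_i)$ depend only on the common rank sizes, the normalized-flow equations for $\abs(W)$ hold verbatim. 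Hence $\abs(W)$ admits a normalized flow with $\nu \equiv 1$.

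Finally, Theorem~\ref{thm:nfsperner} immediately yields that $\abs(W)$ is strongly Sperner. The main (modest) obstacle is precisely the transfer step: one must check that passing to a coarsening with identical ranks neither creates spurious covers nor disturbs the flow equations, which is exactly what the adjacent-rank argument above guarantees.
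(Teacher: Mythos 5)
Your proposal is correct and follows essentially the same route as the paper: log-concavity and normalized flows on each claw poset $C_{jm}$, Theorem~\ref{thm:nfproduct} for the product, and extension by zero weights to the remaining covers of $\abs(W)$, then Theorem~\ref{thm:nfsperner}. Your explicit verification that covers of the coarsening $P$ remain covers of $\abs(W)$ (via the shared rank function $\ell_R$) is exactly the point the paper compresses into its parenthetical about setting extra edge weights to zero.
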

\begin{proof}
Each $C_k$ is trivially log-concave and has a normalized flow, with $\nu\equiv1$. By Theorem~\ref{thm:nfproduct}, $C_m\times\cdots\times C_{nm}$ is log-concave and has a normalized flow. But $C_m\times\cdots\times C_{nm}\subset \abs(W)$, for $W=G(m,1,n)$ by Theorem~\ref{thm:m1n} so $\abs(W)$ has a normalized flow (taking the weights of all edges not contained in the product to be zero) and is thus strongly Sperner by Theorem~\ref{thm:nfsperner}.
\end{proof}

\subsection{The dihedral groups $G(m,m,2)$}

The following proposition was also stated without proof by Harper, Kim, and Livesay in their independent work \cite{Harper2019b}.  We provide a short proof here for the sake of completeness.

\begin{prop} \label{prop:dihedral-normalized-flow}
Let $W=G(m,m,2)$ for some $m \geq 2$, then $\abs(W)$ admits a normalized flow with $\nu \equiv 1$.
\end{prop}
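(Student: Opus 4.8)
The plan is to analyze the structure of $\abs(W)$ for the dihedral group $W = G(m,m,2)$ directly, since this is a small rank-$2$ poset whose shape can be described completely. First I would recall that $G(m,m,2)$ is the dihedral group of order $2m$, consisting of $m$ rotations (including the identity) and $m$ reflections. The identity has reflection length $0$; the $m$ reflections each have reflection length $1$; and the $m-1$ nontrivial rotations each have reflection length $2$, since a rotation fixes only the origin (codimension $2$), and equivalently any rotation is a product of two reflections but of no single reflection. Thus $\abs(W)$ is a ranked poset with rank sizes $|P_0| = 1$, $|P_1| = m$, and $|P_2| = m - 1$.

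Next I would pin down the covering relations. Every reflection $t$ covers the identity, giving a complete bipartite structure between $P_0$ and $P_1$. For the top rank, I would determine for each rotation $\rho$ of reflection length $2$ exactly which reflections $t$ satisfy $t \leq \rho$ in the prefix order, i.e.\ which reflections occur as a prefix in some reduced word $\rho = t\,t'$. Because $\abs(W) = \pre(W) = \co(W)$ here (dihedral groups are Coxeter groups, so Proposition~\ref{prop:codim-equals-prefix} applies), it suffices to count reduced decompositions. I expect that every nontrivial rotation is covered by exactly $m$ reflections, and symmetrically each reflection lies below a predictable number of rotations; the key point is that the bipartite graph between $P_1$ and $P_2$ is regular on both sides. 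Indeed, a dihedral rotation $\rho$ can be written as $t\,t'$ for any reflection $t$ (with $t' = t\rho$ forced), so each of the $m-1$ rotations is covered by all $m$ reflections, making the $P_1$--$P_2$ graph complete bipartite $K_{m,m-1}$.

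With the two bipartite graphs identified as complete bipartite graphs, constructing the normalized flow with $\nu \equiv 1$ becomes a routine matter of distributing weights uniformly. For the $P_0$--$P_1$ layer, I would put flow $f(\mathrm{id}, t) = 1/m$ on each of the $m$ edges; this satisfies the source condition at $\mathrm{id}$ (total $1 = \nu(\mathrm{id})/\nu(P_0)$) and each sink condition at a reflection $t$ (total $1/m = \nu(t)/\nu(P_1)$). For the $P_1$--$P_2$ layer, since the graph is complete bipartite $K_{m, m-1}$, I would set $f(t, \rho) = \tfrac{1}{m(m-1)} \cdot \tfrac{?}{}$—more precisely, the uniform assignment $f(t,\rho) = 1/(m(m-1))$ does not quite balance, so I would instead solve the two marginal constraints: each reflection $t$ needs outgoing total $\nu(t)/\nu(P_1) = 1/m$ spread over $m-1$ rotations, and each rotation $\rho$ needs incoming total $\nu(\rho)/\nu(P_2) = 1/(m-1)$ spread over $m$ reflections. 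The uniform value $f(t,\rho) = 1/\bigl(m(m-1)\bigr)$ satisfies both simultaneously, so the flow exists.

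The main obstacle, and the only step requiring genuine care rather than bookkeeping, is correctly identifying the covering relations in the top layer, that is, verifying that the $P_1$--$P_2$ bipartite graph is complete. This rests on the claim that in a dihedral group every nontrivial rotation admits a reduced expression beginning with any prescribed reflection; I would justify this by the elementary fact that if $t$ is any reflection and $\rho$ any rotation, then $t\rho$ is again a reflection and $\rho = t(t\rho)$, so $t \leq \rho$ by the definition of the prefix order via equation~(\ref{eq:prefix-order}). Once this regularity is established, both layers are complete bipartite graphs and the uniform flow above works verbatim; the edge cases $m = 2$ (where $P_2$ has a single element) and general $m$ are handled uniformly by the same formula, with the understanding that when $m = 2$ the second layer is a claw-type graph $K_{2,1}$ to which the uniform assignment still applies.
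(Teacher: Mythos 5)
Your proof is correct and takes essentially the same route as the paper: the observation that the product of a reflection and a rotation is again a reflection shows every reflection lies below every nontrivial rotation, so both Hasse-diagram layers of $\abs(W)$ are complete bipartite, and the uniform edge weights $f(\mathrm{id},t)=1/m$ and $f(t,\rho)=1/\bigl(m(m-1)\bigr)$ give the normalized flow (the paper leaves this last step implicit). One small blemish: your passing remark that the uniform assignment ``does not quite balance'' is contradicted by your own correct marginal computation immediately afterward and should simply be deleted.
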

\begin{proof}
The non-identity elements of $W$ are either reflections, with fixed-space codimension 1, or rotations, with fixed-space codimension 2.  The product of a reflection and a rotation is a reflection.  This implies that every reflection is covered by every rotation in the absolute order, so the Hasse diagram of $\abs(W)$ is a complete bipartite graph between both pairs of consecutive ranks, which clearly admits the desired normalized flow.
\end{proof}

\subsection{Exceptional Coxeter groups}

In order to verify the normalized flow property for the exceptional Coxeter groups, we take advantage of the large automorphism group of $\abs(W)$.  Since the set of reflections in $W$ is invariant under the conjugation action of $W$, it follows that the conjugation action of $W$ on $\abs(W)$ is by poset automorphisms.  

For $P$ a poset and $G \subset \Aut(P)$ a group of poset automorphisms, the \emph{quotient poset} $P/G$ has as elements the orbits of the action of $G$ on the set $P$.  For two orbits $\mc{O},\mc{O}'$ we have $\mc{O} \leq \mc{O}'$ in $P/G$ if and only if there exists some $x \in \mc{O}$ and some $x' \in \mc{O}'$ such that $x \leq_P x'$ (equivalently, for \emph{all} $x \in \mc{O}$ there exists such an $x' \in \mc{O}'$).

\begin{prop} \label{prop:flow-on-quotient}
Let $P$ be a finite ranked poset and let $G \subset \Aut(P)$, then $P$ has a normalized flow with $\nu_P \equiv 1$ if and only if $P/G$ has a normalized flow with $\nu_{P/G}(\mc{O})=|\mc{O}|$.
\end{prop}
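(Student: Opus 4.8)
The plan is to prove both directions of the equivalence by relating normalized flows on $P$ to those on the quotient $P/G$ via averaging and symmetrization. The key structural observation is that $G$ acts on $P$ by rank-preserving poset automorphisms (since automorphisms preserve the rank function of a ranked poset), so it acts separately within each rank and maps covering relations to covering relations. Thus, for each $i$, the bipartite graph $B_i$ between $P_i$ and $P_{i+1}$ is $G$-invariant, and the quotient bipartite graph $B_i/G$ (between the orbits in rank $i$ and rank $i+1$) is exactly the bipartite graph between consecutive ranks of $P/G$. This reduces the whole statement to a purely bipartite-graph assertion, applied rank by rank.

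For the forward direction, I would start from a normalized flow $f$ on $P$ with $\nu_P \equiv 1$ and produce a flow $\bar{f}$ on $P/G$ with weights $\nu_{P/G}(\mc{O}) = |\mc{O}|$. First I would \emph{symmetrize}: replace $f$ by its $G$-average $f^G(e) = \frac{1}{|G|}\sum_{g \in G} f(g \cdot e)$, which is again a normalized flow on $P$ (the flow conditions are $G$-equivariant, and a convex combination of normalized flows with respect to $\nu \equiv 1$ is again one) and which is now $G$-invariant, i.e.\ constant on $G$-orbits of edges. I would then \emph{push forward} by defining $\bar{f}(\mc{O}, \mc{O}') = \sum f^G(x, x')$ over the edges lying above the covering relation $\mc{O} \lessdot \mc{O}'$ that emanate from a fixed representative $x \in \mc{O}$; the $G$-invariance of $f^G$ makes this independent of the chosen representative. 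The main computation is to verify that $\bar{f}$ satisfies the two normalized-flow balance equations on $P/G$ with the orbit-size weights: this amounts to summing the balance equations of $f^G$ over an orbit and using $\nu_{P/G}((P/G)_i) = \sum_{\mc{O} \subset (P/G)_i} |\mc{O}| = |P_i|$, so the normalizing denominators $\nu_P(P_i) = |P_i|$ match $\nu_{P/G}((P/G)_i)$ automatically.

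For the reverse direction, given a normalized flow $\bar{f}$ on $P/G$ I would lift it to $P$. The natural approach is to apply the Fundamental Lemma (Theorem \ref{thm:fundlemma}) to the quotient map $\ph \colon P \to P/G$, which is surjective, rank-preserving, and measure-preserving for the weights $\nu_P \equiv 1$ and $\nu_{P/G}(\mc{O}) = |\mc{O}|$. To invoke it, I must check that for each covering relation $\mc{O} \lessdot \mc{O}'$ the induced bipartite graph on the fibers $\mc{O}$ and $\mc{O}'$ (with unit weights) admits a normalized flow; here I would use the transitivity of the $G$-action on each fiber, which forces this bipartite graph to be biregular (each vertex of $\mc{O}$ has the same degree, and likewise for $\mc{O}'$), and a biregular bipartite graph always carries the uniform normalized flow with $\nu \equiv 1$. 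I expect the main obstacle to be precisely this fiberwise biregularity claim and the bookkeeping that makes the push-forward/pull-back weights consistent; the regularity follows because the $G$-stabilizers of vertices within a single orbit are all conjugate, so the number of neighbors in the adjacent fiber is constant across the orbit, but stating and verifying this cleanly — and confirming the degree-counting gives the correct uniform flow values — is the delicate step.
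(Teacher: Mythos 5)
Your reverse direction is exactly the paper's argument: apply the Fundamental Lemma (Theorem \ref{thm:fundlemma}) to the quotient map $\ph\colon P \to P/G$, and establish fiberwise biregularity from the $G$-action (the paper does this even more simply than your stabilizer-conjugacy phrasing: if $y_1,\dots,y_k$ are the upper covers of $x$ lying in $\mc{O}'$, then $gy_1,\dots,gy_k$ are the upper covers of $gx$ lying in $\mc{O}'$, since $G$ preserves covers and orbits; no stabilizer bookkeeping is needed). That half of your proposal is sound and matches the paper.

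The forward direction, however, contains a genuine normalization error. You define $\bar{f}(\mc{O},\mc{O}')$ by summing $f^G$ over the edges emanating from a \emph{single fixed representative} $x \in \mc{O}$. With that definition, the outflow condition at $\mc{O}$ gives $\sum_{\mc{O}' \gtrdot \mc{O}} \bar{f}(\mc{O},\mc{O}') = \sum_{y \gtrdot x} f^G(x,y) = 1/|P_i|$, whereas a normalized flow on $P/G$ with $\nu_{P/G}(\mc{O}) = |\mc{O}|$ requires this sum to equal $|\mc{O}|/\nu_{P/G}((P/G)_i) = |\mc{O}|/|P_i|$; so your flow is off by the factor $|\mc{O}|$ whenever the orbit is nontrivial. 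Worse, the definition is asymmetric between the two orbits: by $G$-invariance the total edge sum $S$ satisfies $S = |\mc{O}| \cdot S_x = |\mc{O}'| \cdot S_{x'}$, so the per-representative values differ depending on which side you fix, and the inflow condition at $\mc{O}'$ fails as well. The fix is the paper's formula $f_{P/G}(\mc{O},\mc{O}') = \sum_{x \in \mc{O},\, y \in \mc{O}'} f_P(x,y)$, summing over \emph{all} edges between the orbits: then the outflow at $\mc{O}$ is $\sum_{x \in \mc{O}} \sum_{y \gtrdot x} f_P(x,y) = |\mc{O}|/|P_i|$ and dually at $\mc{O}'$, exactly as required. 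Note this also shows your symmetrization step is superfluous: the total-sum push-forward works for an arbitrary normalized flow on $P$, with no $G$-invariance needed (your averaging argument is valid, just unnecessary). Your remark that the verification ``amounts to summing the balance equations of $f^G$ over an orbit'' is in fact the correct computation --- but it verifies the all-edges formula, not the representative-based one you wrote down.
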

\begin{proof}
First suppose that $P$ has a normalized flow with $\nu_P \equiv 1$.  Then it is immediate from the definitions that $P/G$ has a normalized flow with $\nu_{P/G}(\mc{O})=|\mc{O}|$ and edge weights 
\[
f_{P/G}(\mc{O},\mc{O}')=\sum_{\substack{x \in \mc{O} \\ y \in \mc{O}'}} f_P(x,y).
\]

Next, suppose that $P/G$ has a normalized flow with $\nu_{P/G}(\mc{O})=|\mc{O}|$.  Then the natural map $\ph: P \sur P/G$ given by $x \mapsto G \cdot x$ is clearly surjective, rank preserving, order preserving, and measure preserving.  Thus, by Theorem \ref{thm:fundlemma} it only remains to check that the induced subposet of $P$ on the elements $\ph^{-1}(\mc{O}) \cup \ph^{-1}(\mc{O}')$ admits a normalized flow with $\nu \equiv 1$ for any covering relation $\mc{O} \lessdot \mc{O}'$ in $P/G$.  We claim that the corresponding subgraph of the Hasse diagram of $P$ is in fact a biregular graph (regular on each side of the bipartition).  Indeed, let $x \in \mc{O}$ and suppose $y_1,...,y_k$ are the upper covers of $x$ which lie in $\mc{O}'$.  Given any other element $gx \in \mc{O}$ for $g \in G$, the upper covers of $gx$ which lie in $\mc{O'}$ are exactly $gy_1,...,gy_k$, and similarly for lower covers, so we have proven biregularity.  As observed by Harper \cite{Harper1974}, any biregular graph admits a normalized flow simply by taking all edge weights to be equal and scaled appropriately.  
\end{proof}

Proposition \ref{prop:flow-on-quotient} makes it feasible to construct normalized flows on $\abs(W)$ for the exceptional Coxeter groups of types $H_3, H_4, F_4, E_6, E_7,$ and especially $E_8$.  For example, the Coxeter group $W$ of type $E_8$ has approximately $7 \times 10^8$ elements, and the absolute order $\abs(W)$ has approximately $4 \times 10^{10}$ cover relations.  Explicitly checking for a normalized flow on a poset of this size is unfeasible; however $\abs(W)/W$ has only 112 elements and 449 cover relations, allowing for a simple computer check for a normalized flow using SageMath.  In this way explicit normalized flows (with $\nu(\mc{O})=|\mc{O}|$) have been constructed for $\abs(W)/W$ for all exceptional Coxeter groups $W$.  Together with Proposition \ref{prop:flow-on-quotient}, this proves that these posets $\abs(W)$ admit normalized flows (with $\nu \equiv 1$).  The above discussion proves the following proposition:

\begin{prop} \label{prop:exceptional-types-normalized-flow}
Let $W$ be an irreducible exceptional Coxeter group (type $H_3, H_4, F_4, E_6, E_7,$ or $E_8$), then $\abs(W)$ admits a normalized flow with $\nu \equiv 1$.  In particular, $\abs(W)$ is strongly Sperner.
\end{prop}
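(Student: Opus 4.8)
The plan is to reduce the statement, type by type, to a finite feasibility check on a poset small enough to handle by computer, using the quotient construction of Proposition~\ref{prop:flow-on-quotient}. Since each $W$ here is a Coxeter group, Carter's identity $\ell_R(w)=\codim(V^w)$ gives $\pre(W)=\co(W)$, so $\abs(W)$ is a well-defined ranked poset. Conjugation by $W$ permutes the set of reflections $T$ and hence preserves reflection length; it therefore acts on $\abs(W)$ by rank-preserving poset automorphisms, with orbits exactly the conjugacy classes of $W$. Taking $G=W$ in Proposition~\ref{prop:flow-on-quotient}, it suffices to exhibit, for each exceptional type, a normalized flow on the quotient $\abs(W)/W$ with weight $\nu(\mc{O})=|\mc{O}|$ on each class $\mc{O}$. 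These quotients are tiny compared to $\abs(W)$ itself --- e.g.\ around $112$ elements for $E_8$ against roughly $7\times10^{8}$ --- so this reduction is what makes the problem tractable.

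Next I would build each quotient poset explicitly. For each type I would enumerate the conjugacy classes of $W$ from a concrete matrix realization, record for each class the common value of $\codim(V^w)$ to assign its rank, and attach to the class $\mc{O}$ the weight $|\mc{O}|$. Two classes $\mc{O}\lessdot\mc{O}'$ in consecutive ranks are joined by an edge exactly when some representative $x\in\mc{O}$ is covered in $\abs(W)$ by some $x'\in\mc{O}'$; by the automorphism property this is independent of the choice of $x$, and the covering data can be read directly off the group multiplication. This produces a small ranked graph, bipartite between each pair of consecutive ranks.

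Finally, on each such bipartite layer, finding the edge weights $f$ is a balanced transportation problem: a nonnegative solution must be found to the linear system prescribing supply $|\mc{O}|/|P_i|$ out of each rank-$i$ class and demand $|\mc{O}'|/|P_{i+1}|$ into each rank-$(i+1)$ class. I would solve each such system in exact arithmetic, for instance via a rational linear-programming routine in SageMath, and verify nonnegativity. The heart of the matter --- and the step most likely to fail --- is precisely the existence of these flows: unlike the noncrossing partition lattice $NC_W$, the full absolute order carries no evident uniform structure guaranteeing a normalized flow, so the argument hinges on the computation actually succeeding for every type, and especially for $E_8$ where it is only feasible after passing to the quotient. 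Once such a flow is produced on each $\abs(W)/W$, Proposition~\ref{prop:flow-on-quotient} lifts it to a normalized flow on $\abs(W)$ with $\nu\equiv1$, and Theorem~\ref{thm:nfsperner} then yields that each $\abs(W)$ is strongly Sperner.
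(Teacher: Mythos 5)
Your proposal is correct and follows essentially the same route as the paper: reduce to the quotient $\abs(W)/W$ under the conjugation action via Proposition~\ref{prop:flow-on-quotient} (with $\nu(\mc{O})=|\mc{O}|$), verify the flow on each small quotient by exact computation in SageMath, and lift back using Theorem~\ref{thm:nfsperner}. The only difference is expository detail --- you spell out the layer-by-layer transportation-problem formulation that the paper leaves implicit.
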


\subsection{Finishing the proof}

\begin{proof}[Proof Theorem \ref{thm:main-theorem}]

Let $W=W_1 \times \cdots \times W_k$ be as described in the statement of Theorem \ref{thm:main-theorem}.  By Corollary \ref{cor:G(m,1,n)-normalized-flow}, Proposition \ref{prop:dihedral-normalized-flow}, and Proposition \ref{prop:exceptional-types-normalized-flow}, the absolute order $\abs(W_i)$ of each factor admits a normalized flow with $\nu \equiv 1$.  By Proposition \ref{prop:product-of-groups-has-product-order}, 
\[
\abs(W) \cong \abs(W_1) \times \cdots \times \abs(W_k).
\]
By Equation (\ref{eq:rank-generating-function}) and the classical result that the coefficients of a real-rooted real polynomial are log-concave, we may apply Theorem \ref{thm:nfproduct} to see that $\abs(W)$ admits a normalized flow (with $\nu \equiv 1$), implying that it is strongly Sperner.
\end{proof}

\begin{remark}
It is important to note that proving the stronger normalized flow property for the irreducible groups $W_i$ is necessary for our proof of Theorem \ref{thm:main-theorem}.  It is not in general true that the product of (even rank log-concave) strongly Sperner posets is Sperner \cite{Proctor}.
\end{remark}

\section{Counterexamples} \label{sec:counterexamples}

In this section we show that neither $\co(W)$ nor $\pre(W)$ is Sperner for general complex reflection groups $W$.

\subsection{Codimension order}

\begin{prop} \label{prop:codimension-sperner-counterexample}
Many small examples show that $\co(W)$ need not be Sperner when $\co(W) \neq \pre(W)$.  For example, $\co(G(4,2,2))$ is not Sperner. 
\end{prop}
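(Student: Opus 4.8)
The plan is to exhibit an explicit, small complex reflection group $W$ with $\co(W) \neq \pre(W)$ whose codimension order fails the Sperner property, and verify this by direct computation. I would take $W = G(4,2,2)$ as suggested. First I would enumerate the elements of $W$: by Proposition~\ref{prop:codim-equals-prefix} we know $\co(W) \neq \pre(W)$ for this group (it is not a Coxeter group, not in $G(m,1,n)$, and not dihedral since here $p=2\neq m=4$), so the codimension order is genuinely distinct from the prefix order and the subadditivity structure of Equation~(\ref{eq:codimension-order}) governs the covering relations. Writing each $w = [a_1,a_2|\sigma]$ with $a_1,a_2 \in \Z/4\Z$, $\sigma \in S_2$, and $2 \mid (a_1+a_2)$, there are $16$ elements of $G(4,2,2)$, acting on $\C^2$.

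Next I would compute $\codim(V^w)$ for each element, using the matrix representation in which $w$ sends $v_k \mapsto \exp(2\pi\sqrt{-1}\,a_k/4)\,v_{\sigma(k)}$; the codimension is $2$ minus the dimension of the $1$-eigenspace. This partitions $W$ into ranks $\co(W)_0 \sqcup \co(W)_1 \sqcup \co(W)_2$. The key phenomenon, highlighted in Section~\ref{subsec:rank-functions}, is that some elements with $\codim(V^w)=2$ nonetheless have reflection length exceeding what the codimension order's rank function assigns, or conversely some rank-one elements of $\co(W)$ have fixed-space codimension two; this is precisely what makes the covering relations of $\co(W)$ deviate from a well-behaved product structure. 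I would then determine all covering relations from Equation~(\ref{eq:codimension-order}) and record the three rank sizes $|\co(W)_0|, |\co(W)_1|, |\co(W)_2|$.

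To certify the failure of the Sperner property, I would compare the size of a largest antichain against the largest single rank. Concretely, I expect to find an antichain of size strictly greater than $\max_i |\co(W)_i|$, which by definition witnesses that $\co(W)$ is not even $1$-Sperner. The cleanest way to present this is to display the rank sizes, exhibit one explicit antichain (a list of elements, pairwise incomparable in $\co(W)$) whose cardinality beats every rank, and note that incomparability follows immediately from checking Equation~(\ref{eq:codimension-order}) fails for each pair.

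The main obstacle is purely bookkeeping rather than conceptual: correctly computing $\codim(V^w)$ for the elements where $\sigma$ is a transposition with $a_1+a_2 \equiv 2 \pmod 4$, since there the eigenvalues of the $2\times 2$ monomial matrix are $\pm\sqrt{\exp(2\pi\sqrt{-1}(a_1+a_2)/4)}$ and one must check carefully whether $+1$ is an eigenvalue. Because the example is so small, a direct computation (by hand or with a short SageMath check) settles the matter unambiguously, so I would simply carry out that verification and report the resulting antichain.
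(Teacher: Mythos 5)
Your overall route --- direct computation on the $16$ elements of $G(4,2,2)$, followed by exhibiting an antichain bigger than every rank --- is exactly the paper's, but there is one genuine wrong turn in your plan: you assert that computing $\codim(V^w)$ ``partitions $W$ into ranks $\co(W)_0 \sqcup \co(W)_1 \sqcup \co(W)_2$.'' For this poset that is false, and the failure is precisely the point of the example. The scalar elements $x=[1,1|\mathrm{id}]=\sqrt{-1}\cdot I$ and $y=[3,3|\mathrm{id}]=-\sqrt{-1}\cdot I$ have fixed-space codimension $2$, but one checks that neither is above any reflection (for every codimension-$1$ element $u$, the quotient $u^{-1}x$ again has codimension $2$, so Equation~(\ref{eq:codimension-order}) fails). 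Hence $x$ and $y$ cover only the identity and are maximal, so they sit at poset rank $1$: codimension is not even a valid rank function here, since the cover $\mathrm{id} \lessdot x$ jumps codimension by $2$. If you grade by codimension as written, you get level sizes $1,6,9$, and the natural antichain --- the nine codimension-$2$ elements, which is in fact the largest antichain --- has size equal to the largest level. Carried out literally, your verification would therefore report \emph{no} Sperner violation and the proof would fail.

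The fix is small and is what the paper does: compute the poset rank from the cover relations, which gives rank sizes $|P_0|=1$, $|P_1|=8$ (six reflections together with the maximal elements $x,y$), and $|P_2|=7$. Then $P_2 \cup \{x,y\}$ --- equivalently, the set of all nine codimension-$2$ elements, which are pairwise incomparable since comparability of distinct elements forces strictly increasing codimension --- is an antichain of size $9 > 8$, so $\co(G(4,2,2))$ is not ($1$-)Sperner. You clearly noticed the relevant phenomenon (you cite Foster-Greenwood's rank-one elements of codimension two), but your write-up uses it only as motivation while the rank decomposition you propose to record contradicts it; make the poset-rank computation, not the codimension partition, the basis of the rank sizes and the argument goes through.
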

\begin{proof}
Let $P=\co(G(4,2,2))$, then $P$ has rank sizes $|P_0|=1,|P_1|=8,$ and $|P_2|=7$.  There are two maximal elements $x,y$ in rank one, thus $P_2 \cup \{x,y\}$ is an antichain of size 9, larger than any rank size.  
\end{proof}

In fact, $\co(W)$ need not even be \emph{ranked} in general: there is no consistent rank function for $\co(G(4,2,4))$.  This means that, for some complex reflection groups $W$ anyway, it does not even make sense to ask whether $\co(W)$ is Sperner.  We are not aware of an example $\co(W)$ which is ranked and Sperner, except in the cases $\co(W)=\pre(W)$.  

\begin{question}
Is there a complex reflection group $W$ such that $\co(W) \neq \pre(W)$, but $\co(W)$ is ranked and Sperner?
\end{question}

\subsection{Prefix order}

A finite poset is $\emph{graded}$ if all of its maximal chains have the same length.  Although the posets $\pre(W)$ are always ranked, they are not in general graded when $\pre(W) \neq \co(W)$.  Our strategy for finding a prefix order which is not Sperner is therefore to construct a reflection group $W$ such that $\pre(W)$ has a maximal element $m$ occurring in a rank below the largest rank.  The antichain consisting of $m$ together with the largest rank would then violate the Sperner property.

\begin{prop} \label{prop:prefix-counterexample}
The prefix order on the reflection group $G(10,5,3)^{12}$ is not Sperner.
\end{prop}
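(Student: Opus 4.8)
The plan is to reduce to a single factor and then exhibit a maximal element lying strictly below the widest rank of the twelfth power, exactly the strategy described before the statement. By Proposition \ref{prop:product-of-groups-has-product-order} we have $\pre(G(10,5,3)^{12}) \cong P^{12}$ where $P := \pre(G(10,5,3))$, and a tuple is maximal in $P^{12}$ precisely when each coordinate is maximal in $P$. So everything comes down to understanding $P$ together with the rank sizes of $P^{12}$.

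First I would compute reflection lengths in $P$ from Shi's formulae \cite{shi}; note that $p=5\neq 1$ here, so Proposition \ref{prop:lenm1n} does not apply directly. The essential phenomenon to extract is that $G(10,5,3)$ is not well generated, so reflection lengths can exceed the rank $n=3$. The only type-(2) reflections available are the three ``sign'' reflections $[5,0,0\,|\,\mathrm{id}]$, $[0,5,0\,|\,\mathrm{id}]$, $[0,0,5\,|\,\mathrm{id}]$, and this scarcity forces some diagonal elements to be long: a diagonal element $[a_1,a_2,a_3\,|\,\mathrm{id}]$ with all $a_i\neq 0$ and $a_1+a_2+a_3\equiv 5 \pmod{10}$ has reflection length $3$ when one coordinate equals $5$ and the other two are negatives of each other, but reflection length $5$ otherwise. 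Bookkeeping over the conjugacy classes of $S_3$ and the admissible phase vectors (using $\codim(V^w)$ as a lower bound, the homomorphisms $w\mapsto\sum a_i\in\Z/10$ and $w\mapsto\operatorname{sgn}(\sigma)$ for parity, and explicit short factorizations for upper bounds) should yield
\[
F(P,q) = 1 + 33q + 287q^2 + 519q^3 + 312q^4 + 48q^5,
\]
whose mean rank $3652/1200 > 3$ is what ultimately makes the construction work.

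Next I would single out the central involution $-I = [5,5,5\,|\,\mathrm{id}]$. It has $\codim(V^{-I})=3$, hence reflection length $3$ (it is the product of the three type-(2) reflections), and it is maximal in $P$: since $-I$ is central, $(-I)t = -t$ for every reflection $t$, and in each case $\ell_R(-t)=2<3$, so no reflection lengthens it. Thus $-I$ is maximal at rank $3$, well below the top rank $5$, and $m:=(-I,\dots,-I)\in P^{12}$ is maximal at rank $36$.

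Finally I would verify that the largest coefficient of $F(P,q)^{12}$ occurs in degree $37$; this is a finite computation with the polynomial above, and the point is that $12\cdot\tfrac{3652}{1200}=36.52$, so the (essentially symmetric) twelvefold convolution peaks at $37>36$. Granting this, $m$ is incomparable to every element of rank $37$: nothing lies above $m$ since it is maximal, and nothing of rank $37$ can lie below a rank-$36$ element. Hence $P^{12}_{37}\cup\{m\}$ is an antichain of size $|P^{12}_{37}|+1$, strictly larger than the largest rank, so $\pre(G(10,5,3)^{12})$ is not Sperner. The hard part is the length computation: the naive guess $\ell_R=\codim$ (or $\codim+1$) fails exactly for the long diagonal elements, and correctly counting the rank-$5$ elements is precisely what lifts the mean rank above $3$. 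This is also why a large exponent is used, namely so that $3\cdot 12=36$ falls strictly below the peak degree $37$; for substantially smaller exponents the peak would sit at degree $3k$ and the maximal element $(-I,\dots,-I)$ would be at the widest rank rather than below it, breaking the argument.
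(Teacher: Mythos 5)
Your proposal follows the same overall route as the paper's proof: reduce to $P^{12}$ where $P=\pre(G(10,5,3))$, exhibit a maximal element of $P$ at rank $3$ (below the top rank $5$), and check that the largest coefficient of $F(P,q)^{12}$ sits in degree $37$, so that the rank-$36$ maximal tuple together with the widest rank beats every rank size. The difference is in how the three input facts are established: the paper simply verifies them by computer, whereas you supply hand arguments, and these check out. Your identification of the maximal element as $-I=[5,5,5|\mathrm{id}]$ with the centrality argument is correct: any upper cover of $-I$ would have the form $(-I)t=-t$ with $\ell_R(-t)=4$, but for a type-(2) reflection $t$ one has $-t$ equal to the product of the other two type-(2) reflections, and for a type-(1) reflection $t=[a,-a,0|(i,j)]$ one has $-t=[a+5,\,5-a,\,5|(i,j)]=[a+5,\,-(a+5),\,0|(i,j)]\cdot[0,0,5|\mathrm{id}]$, so $\ell_R(-t)=2$ in all cases and $-I$ has no covers. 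Notably, your rank generating function $1+33q+287q^2+519q^3+312q^4+48q^5$ \emph{differs} from the paper's printed $1+33q+287q^2+519q^3+314q^4+48q^5$ in degree $4$, and you are right: the paper's coefficients sum to $1202$, which is impossible since $|G(10,5,3)|=10^3\cdot 3!/5=1200$, while a direct case analysis of Shi-type lengths (diagonal elements contribute $72$ of length $4$, transposition-type elements contribute $240$) confirms your value $312$; the paper's ``314'' is evidently a transcription typo. One caveat: your argument that the mode of $F(P,q)^{12}$ is at $q^{37}$ rather than $q^{36}$ rests on the mean $36.52$ plus approximate symmetry, which is a heuristic, not a proof --- the mode of a convolution is not determined by its mean --- so, exactly as in the paper, this step must remain an explicit finite computation (which does confirm the peak at degree $37$ with the corrected polynomial). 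With that computation granted, your proof is complete and slightly strengthens the paper's by making two of the three computer-checked facts human-verifiable.
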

\begin{proof}
Let $W=G(10,5,3)$ and $P=\pre(W)$; the following facts are all easily verified by computer:
\begin{itemize}
    \item $\rank(P)=5$,
    \item $P$ has a maximal element $m$ of rank 3, and 
    \item $F(P,q)=1+33q+287q^2+519q^3+314q^4+48q^5$.
\end{itemize}

Although $P$ itself is strongly Sperner, we observe that the largest coefficient in $F(P,q)^{12}$ is the coefficient of $q^{37}$.  This means that the largest rank in $P^{12}$ is $(P^{12})_{37}$.  However $(m,...,m)$ is a maximal element in $P^{12}$ of rank 36, so $P^{12} \cong \pre(W^{12})$ is not Sperner. 
\end{proof}

\section{A conjecture for type $D_n$} \label{sec:type-D}
In this section, we discuss absolute order on the Weyl group $W$ of type $D_n$ for $n \geq 4$ ($W$ is also the group $G(2,2,n)$), and explain why the methods used to prove Theorem \ref{thm:m1n} will not work in this case.

It is well-known that the exponents $e_i$ for type $D_n$ are $1,3,5,\ldots,2n-3,n-1$. By the discussion in Section~\ref{sec:background}, we know that
\[
F(\abs(W),q)=(1+(n-1)q) \cdot \prod_{i=1}^{n-1}(1+(2i-1)q).
\]
Therefore it is natural to hope that $\abs(W)$ contains a a product of claw posets so that we might apply the techniques of Section~\ref{sec:m1n}. However, we will show that such strategy cannot work.

Following the setup from Section~\ref{sec:m1n}, every element in $G(2,2,n)$ can be written as $w=[a_1,\ldots,a_n|\sigma]$ with $a_k\in\Z/2\Z$ for $k\in[n]$, $\sum_{k=1}^n a_k=0$, and $\sigma\in S_n$. We can also view $w$ as a permutation on $1,2,\ldots,n,-1,-2,\ldots,-n$ (also called a \textit{signed permutation} on $[n]$) such that $w(i)=-w(-i)$ for all $i$, with the condition that there are an even number of $k\in[n]$ such that $w(k)<0$. For convenience, we write such $w$ in disjoint cycle notation for $\sigma$, and put a dot on $k\in[n]$ if $a_k=1\in\Z/2\Z$. For example, $(\dot1\dot2)$ refers to $w=[1,1,0,0,\ldots|(1,2)]\in G(2,2,n)$, which is also represented as $w(1)=-2$, $w(2)=-1$, $w(k)=k$ for $k=3,\ldots,n$, as a signed permutation.

We see that $G(2,2,n)$ doesn't have any reflections of type (2) (see Section~\ref{sec:m1n}). In fact, all of its reflections are of the form $(i,j)$ for some $i<j\in[n]$ or $(\dot i, \dot j)$ for some $i<j\in[n]$.
\begin{prop}\label{prop:DnoProd}
For $W=G(2,2,n)$ with $n\geq4$, $\abs(W)$ does not contain $C_2\times C_4\times\cdots\times C_{2n-2}\times C_{n}$.
\end{prop}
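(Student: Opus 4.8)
The plan is to prove the non-containment by a dimension/counting obstruction at rank $1$. If $\abs(W)$ contained $Q := C_2 \times C_4 \times \cdots \times C_{2n-2} \times C_n$ as a subposet (on a subset of $W$, respecting rank and order), then in particular the rank-$1$ elements of $Q$ would have to be realized as genuine reflections in $W$, and the rank-$2$ elements of $Q$ would have to be reflection-length-$2$ elements lying above the appropriate reflections. So first I would count: the rank-$1$ part of $Q$ has size $(2-1)+(4-1)+\cdots+(2n-2-1)+(n-1) = (n-1)^2 + (n-1) = n^2-n$, which matches the total number of reflections in $G(2,2,n)$ (there are $2\binom{n}{2} = n^2-n$ reflections, of the two forms $(i,j)$ and $(\dot i,\dot j)$). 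Thus a containment would force the rank-$1$ elements of $Q$ to be \emph{all} reflections of $W$, and the claw factors would impose a partition of the reflection set $T = T_1 \sqcup \cdots \sqcup T_n$ with block sizes $1,3,5,\ldots,2n-3,n-1$ in some order.

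The key structural point I would exploit is the product structure of the claw poset. In $Q$, pick two reflections $t, t'$ lying in \emph{different} claw factors (say in factors $C_a$ and $C_b$ with $a \neq b$). Then $Q$ forces $t$ and $t'$ to have a common upper bound of rank $2$ — namely the element of $Q$ whose $a$-coordinate is $t$, whose $b$-coordinate is $t'$, and whose other coordinates are the minimum — and moreover this upper bound must lie in $\abs(W)$, so $tt'$ must have reflection length exactly $2$, i.e.\ $\ell_R(tt') = 2$. Conversely, two reflections in the \emph{same} claw factor cannot have a common upper bound of rank $2$ in $Q$ (the join in a single $C_k$ does not exist, as rank-$1$ elements of a claw are pairwise incomparable with no common upper bound below the top). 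So I would translate the claimed containment into a purely combinatorial \textbf{partition condition} on the reflection set: there is a partition of $T$ into blocks of the prescribed sizes such that two reflections lie in the same block if and only if their product has reflection length $< 2$ — but since distinct reflections always satisfy $\ell_R(tt') \in \{0,2\}$ with $tt' \neq \mathrm{id}$ giving length $2$ unless $t=t'$, the real content is whether the "same block" relation is consistent, i.e.\ whether being in different blocks is equivalent to $\ell_R(tt')=2$, which is automatic, so I must instead look at which pairs fail to have length-$2$ products.

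Let me refine this: using Proposition~\ref{prop:lenm1n} adapted to $G(2,2,n)$, the product of two distinct reflections $t,t'$ has $\ell_R(tt') = 2$ \emph{unless} $tt'$ is itself a reflection or the identity, which for distinct reflections cannot happen (a product of two distinct reflections is never a single reflection in these groups), so $\ell_R(tt')=2$ for \emph{all} distinct pairs. This means every pair of distinct reflections has a rank-$2$ common upper bound in $\abs(W)$ — so the length obstruction alone does not forbid the containment, and I must instead count \emph{rank-$2$ elements} more carefully. The correct obstacle is a counting mismatch at rank $2$: the number of rank-$2$ elements of $Q$ is $\sum_{a<b}|T_a||T_b|$ (choosing one nontrivial coordinate in each of two distinct factors) summed over the block sizes $\{1,3,\ldots,2n-3,n-1\}$, and each such element of $Q$ is a \emph{distinct} product $tt'$ in $W$. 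So I would compare this count against the actual number of rank-$2$ elements of $\abs(W)$, showing that for $n \geq 4$ the required injectivity fails because too many pairs $(t,t')$ from different blocks collapse to the same element $tt' \in W$.

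The hard part will be making this collision count precise: I need to show that the map $(t,t') \mapsto tt'$ from ordered pairs in distinct blocks is forced to be injective by the subposet structure (two different rank-$2$ elements of $Q$ must map to two different elements of $W$), yet in $G(2,2,n)$ many distinct pairs of commuting reflections, or reflections whose supports interact, yield the \emph{same} length-$2$ element — for instance $(\dot i,\dot j)$ and $(i,j)$ can combine with a third reflection to produce coincidences, and more simply a rank-$2$ element with connected support of size $3$ has several reduced factorizations $t t'$. Concretely, a single rank-$2$ element of $\abs(W)$ may be the product of several distinct reflection pairs, so the rank-$2$ elements of $Q$, which are by construction distinct in $W$, cannot all be hit. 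I would therefore pin down the number of reflection pairs $\{t,t'\}$ whose product equals a given rank-$2$ element $w$ (this depends on the "type" of $w$ — whether its support is a transposition-pair, a $4$-cycle in the signed sense, etc.), and then derive a contradiction between this multiplicity data and the rigid block-size partition forced above, with the $n \geq 4$ hypothesis entering precisely where the extra exponent $n-1$ (the special claw factor $C_n$) cannot be accommodated. \emph{This final multiplicity bookkeeping, matching the forced partition of $T$ against the genuine rank-$2$ fiber sizes in $G(2,2,n)$, is where I expect the real work and the only subtle case analysis to lie.}
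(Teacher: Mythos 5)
Your opening moves match the paper's: counting $(1)+(3)+\cdots+(2n-3)+(n-1)=n^2-n$ rank-one elements, concluding that a containment would use \emph{all} reflections of $G(2,2,n)$ and force a partition $T=T_1\sqcup\cdots\sqcup T_n$ with block sizes $1,3,\ldots,2n-3,n-1$. But the core of your plan --- a counting contradiction at rank $2$ --- cannot work, for two reasons. First, by Equation (\ref{eq:rank-generating-function}) the rank generating function of $\abs(W)$ is $(1+q)(1+3q)\cdots(1+(2n-3)q)(1+(n-1)q)$, which is \emph{exactly} the rank generating function of $Q=C_2\times C_4\times\cdots\times C_{2n-2}\times C_n$. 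In particular the number of rank-$2$ elements of $Q$ is $e_2(1,3,\ldots,2n-3,n-1)$, independent of how the blocks are assigned, and this equals $|\abs(W)_2|$ on the nose; a hypothetical containment is a rank-preserving bijection on every rank, so no count of ``collisions'' can ever produce a numerical mismatch. Second, your injectivity premise misreads what the containment forces: the rank-$2$ element of $Q$ sitting above $t\in T_a$ and $t'\in T_b$ need only correspond to \emph{some} length-$2$ element $w$ with $w\geq t$ and $w\geq t'$ in $\abs(W)$, not to the product $tt'$; and the fact that a single $w$ (say a $3$-cycle, with three reduced factorizations) lies above several reflections is perfectly compatible with an order-embedding whose rank sizes already agree. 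So the ``multiplicity bookkeeping'' you defer to the end is not merely the hard part --- as framed, there is nothing there to exploit.

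The missing idea is a \emph{local} obstruction coming from cover multiplicities, not a global count. In $Q$, every rank-$2$ element covers exactly two rank-$1$ elements, and these lie in distinct claw factors. So whenever $w\in W$ with $\ell_R(w)=2$ covers \emph{exactly two} reflections in $\abs(W)$, those two reflections (which must be the two covered in $Q$) are forced into distinct blocks. The paper identifies such $w$: products $tt'$ of reflections with disjoint supports, and the elements $(\dot i)(\dot j)$, whose only lower covers are $(i,j)$ and $(\dot i,\dot j)$. Consequently $(i,j)$ and $(\dot i,\dot j)$ never share a block, and any two reflections in a common block have intersecting supports; so each block maps injectively under $\phi\bigl((i,j)\bigr)=\phi\bigl((\dot i,\dot j)\bigr)=\{i,j\}$ to a pairwise-intersecting family of $2$-subsets of $[n]$. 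For $n\geq 4$ such a family has size at most $n-1$ (it is a star or a triangle, and a triangle has only $3$ elements), contradicting the block of size $2n-3$. Note this is also where the hypothesis $n\geq 4$ genuinely enters --- via the extremal set-theoretic bound, with the contradiction supplied by the block of size $2n-3$ --- rather than through the special factor $C_n$ as you guessed; for $n=3$ the triangle family defeats the bound, consistent with $D_3=A_3$, where the containment does exist.
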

\begin{proof}
Assume for the sake of contradiction that $P=C_2\times C_4\times\cdots\times C_{2n-2}\times C_{n}\subset\abs(W)$. Each element of rank 1 in $P$ can be identified with a reflection in $W$, and the inclusion $P\subset\abs(W)$ partitions these elements into sets of size $1,3,5,\ldots,2n-3,n-1$. Denote these sets of reflections as $T_1,T_2,\ldots,T_n$ where $|T_k|=2k-1$ for $k=1,\ldots,n-1$ and $|T_n|=n-1$. Now, each element of $P$, and correspondingly each element of $W$, is labeled as $(t_1,t_2,\ldots,t_n)$ where $t_k\in T_k$ or $t_k=\mathrm{id}$. 

Focus on elements of rank 2. Each of them is labeled as $(t_1,t_2,\ldots,t_n)$, where all but two $t_k$'s are the identity. In particular, this element covers exactly two reflections in $P$, which belong to different $T_k$'s. Therefore, if $w\in\abs(W)$ has length 2, and covers exactly two reflections $t$ and $t'$ in $\abs(W)$, then $t\in T_k$, $t'\in T_{k'}$ for distinct $k\neq k'$. It is not hard to see that the following types of elements of $G(2,2,n)$ with absolute length 2 cover exactly two reflections:
\begin{itemize}
\item $tt'=t't$ with $t\in\{(i,j),(\dot i, \dot j)\}$ and $t'\in\{(i',j'),(\dot i',\dot j')\}$ such that $\{i,j\}\cap\{i',j'\}=\emptyset$;
\item $tt'=t't$ with $w=(\dot i)(\dot j)$, $t=(i,j)$ and $t'=(\dot i,\dot j)$.
\end{itemize}

Define a helper function $\phi:T\rightarrow{[n]\choose 2}$, where $T=T_1\sqcup\cdots\sqcup T_n$ is the set of reflections, by $\phi((i,j))=\phi((\dot i,\dot j))=\{i,j\}$. By arguments above, $(i,j)$ and $(\dot i,\dot j)$ cannot be in the same $T_k$, meaning that $\phi|_{T_k}$ is injective for each $k\in[n]$. Moreover, for distinct $t\neq t'\in T_k$, $\phi(t)$ and $\phi(t')$ must intersect. Thus, the image of $\phi(T_k)$, having the same cardinality as $T_k$, is a set of pairwise intersecting 2-element subset of $[n]$. When $n\geq4$, there can be at most $n-1$ such sets, meaning $|T_k|\leq n-1$. But $|T_{n-1}|=2n-3>n-1$, a contradiction.
\end{proof}

Proposition~\ref{prop:DnoProd} shows that the approach used in Theorem \ref{thm:m1n} doesn't work for the absolute order in type $D$. However, we still conjecture that the absolute order of type $D_n$ admits a normalized flow (Conjecture~\ref{conj:type-D}), and have verified this conjecture using computer search up to $n=8$.

\section*{Acknowledgements}
The authors wish to thank Richard Stanley for suggesting this problem, and Gene B. Kim and Larry Harper for helpful correspondence.  C.G. was partially supported by an NSF Graduate Research Fellowship under grant no. 1122374.

\bibliographystyle{plain}
\bibliography{main.bib}
\end{document}